\def\rit{\mathbb{R}}
\def\zit{\mathbb{Z}}
\def\ppit{\mathbb{P}} 
\def\cit{\mathbb{C}} 
\def\fit{\mathbb{F}}
\newcommand{\pf}{{\em Proof.~}}
\newcommand{\qed}{\hfill~~\mbox{$\Box$}}
\newenvironment{proof}{\smallskip \noindent \pf}{\qed \bigskip}
\newtheorem{theorem}{Theorem}[subsection]
\newtheorem{proposition}[theorem]{Proposition}
\newtheorem{lemma}[theorem]{Lemma}
\newtheorem{corollary}[theorem]{Corollary}
\newtheorem{remark}[theorem]{Remark}
\newtheorem{example}[theorem]{Example}
\begin{document}

\title{\bf Examples of limits of Frobenius (type) structures: the singularity case}                                                      
\author{\sc Antoine Douai
\thanks{A.Douai: Laboratoire J.A Dieudonn\'e, UMR CNRS 6621,
Universit\'e de Nice, Parc Valrose, F-06108 Nice Cedex 2, France. Email: douai@unice.fr . Partially supported by ANR grant BLAN08-1 309225 (SEDIGA)
}}

\maketitle

\begin{abstract}
We give examples of families of Frobenius type structures on the punctured plane and we study their limits at the boundary. We then discuss the existence of a limit Frobenius manifold. We also give an example of a logarithmic Frobenius manifold. 
\end{abstract}

\begin{center} PLEASE NOTE THAT THIS PREPRINT HAS BEEN SUPERSEDED BY arXiv:0909.4063
 
\end{center}

\section{Introduction}
Let $w_{1},\cdots , w_{n}$ be positive integers and
$f:(\cit^{*})^{n}\rightarrow\cit$ be the Laurent polynomial defined by 
$$f(u_{1},\cdots ,u_{n})=u_{1}+\cdots +u_{n}+\frac{1}{u_{1}^{w_{1}}\cdots u_{n}^{w_{n}}}.$$
It has been explained in \cite{DoSa2} how to attach to $f$ a canonical Frobenius manifold: 
the two main ingredients are a Frobenius type structure on a point, that is a tuple
$$(E^{o}, R_{0}^{o}, R_{\infty}^{o}, g^{o})$$
where $E^{o}$ is a finite dimensional vector space over $\cit$,  $g^{o}$ is a symmetric and nondegenerate bilinear form on $E^{o}$, $R_{0}^{o}$ and $R_{\infty}^{o}$ 
are two endomorphisms of $E^{o}$ such that $R_{\infty}^{o}+(R_{\infty}^{o})^{*}=nId$ and $(R_{0}^{o})^{*}=R_{0}^{o}$ ($^{*}$ denotes the adjoint with respect to $g^{o}$) and a pre-primitive and homogeneous section of $E^{o}$, namely a section which is a cyclic vector of $R_{0}^{o}$ and an eigenvector of  $R_{\infty}^{o}$. 
The canonical solution of the Birkhoff problem
for the Brieskorn lattice of $f$ given by M. Saito's method yields the required canonical Frobenius type structure. 
This is the punctual construction. This gives, for $w_{1}=\cdots =w_{n}=1$, the mirror partner of the projective space $\ppit^{n}$ (see \cite{Ba}), and more generally the mirror partner of the weighted projective space $\ppit (1,w_{1},\cdots ,w_{n})$ (see \cite{Man} and \cite{Coa}).

 The purpose of these notes is to give analogous results for the deformation
$F:(\cit^{*})^{n}\times X\rightarrow \cit$ of $f$ defined by  
$$F(u_{1},\cdots ,u_{n},x)=u_{1}+\cdots +u_{n}+\frac{x}{u_{1}^{w_{1}}\cdots u_{n}^{w^{n}}}$$
where $X:=\cit^{*}$ and then to discuss the existence of a ''limit'' Frobenius manifold as $x$ approaches $0$. This kind of problem is also considered in \cite{dGMS}, using another strategy (we will not use the reference \cite{DoSa1} at all) and for a different class of functions. Notice however that the case $w_{1}=\cdots =w_{n}=1$ is common to both papers.

Let us precise the situation: let
$$G=\frac{\Omega^{n}(U)[\tau ,\tau^{-1}, x, x^{-1}]}{(d_{u}-\tau d_{u}F)\wedge\Omega^{n-1}(U)[\tau ,\tau^{-1}, x ,x^{-1}]}$$
be the (Fourier-Laplace transform of the) Gauss-Manin system of $F$
and
$$G_{0}=\frac{\Omega^{n}(U)[\tau^{-1}, x, x^{-1}]}{(\tau^{-1}d_{u}-d_{u}F)\wedge \Omega^{n-1}(U)[\tau^{-1}, x ,x^{-1}]}$$
be (the Fourier-Laplace transform of) its Brieskorn lattice, where the notation $d_{u}$ means that the differential is taken with respect to $u$ only.
$G$ is equipped with a connection $\nabla$ defined by   
$$\nabla_{\partial_{\tau}}(\omega_{i}\tau^{i})=i\omega_{i}\tau^{i-1}-F\omega_{i}\tau^{i}$$
and
$$\nabla_{\partial_{x}}(\omega_{i}\tau^{i})=\frac{\partial \omega_{i}}{\partial x}\tau^{i}-\frac{\partial F}{\partial x}\omega_{i}\tau^{i+1}.$$
In particular, if we put $\theta :=\tau^{-1}$,
 $G_{0}$ is stable under the action of $\theta^{2}\nabla_{\partial_{\theta}}$. One defines in the same way the Gauss-Manin system ({\em resp.} the Brieskorn lattice) $G^{o}$ ({\em resp.} $G_{0}^{o}$) of the Laurent polynomial $f$ (see \cite[section 4]{Do1} for details). It turns out 
that one can solve the Birkhoff problem for $G_{0}$ on the whole $X$: $G_{0}$ is a free $\cit [\theta , x, x^{-1}]$-module of rank $\mu =1+w_{1}+\cdots +w_{n}$ and there exists a basis of  $G_{0}$ 
in which the matrix of the connection $\nabla$ takes the form
$$(\frac{A_{0}(x)}{\theta}+A_{\infty})\frac{d\theta}{\theta}+(-\frac{A_{0}(x)}{\mu\theta}+R)\frac{dx}{x},$$
$A_{0}(x)$ being a $\mu\times\mu$ matrix with coefficients in $\cit [x]$, $A_{\infty}$ and $R$ being diagonal matrices with constant coefficients (see proposition \ref{libre}).
This gives a Frobenius type structure on $X$ (see \cite{Do1} and \cite{HeMa}), that is a tuple 
$$(X, E, R_{0}, R_{\infty}, \Phi, \bigtriangledown)$$
where the different objects involved satisfy some natural compatibility relations
which can be extended, and this is done in section \ref{dualite}, to a Frobenius type structure with metric (corollary \ref{STF})
$$\fit =(X, E, R_{0}, R_{\infty}, \Phi, \bigtriangledown ,g)$$
which will be the central object of these notes. It should be emphasized that the metric $g$ plays here a fundamental role. This Frobenius type structure, together with the data of a pre-primitive, homogeneous and $\bigtriangledown$-flat form, yields also a Frobenius manifold on $\Delta\times (\cit^{\mu -1},0)$ where $\Delta$ denotes the open disc of radius one, centered at $x=1$ (see \cite{Do1}, \cite{HeMa}): we will use it first to compare the canonical Frobenius manifolds attached to the different polynomials $F_{x}:=F(.\ ,x)$, $x\in \Delta$, by the punctual construction (see section \ref{construction}).

The second part of these notes (section \ref{limite}) is devoted to the study of the limit, as $x$ approaches $0$, of the Frobenius type structure $\fit$. This limit is defined using Deligne's canonical extension ${\cal L}^{\varphi}$ such that the eigenvalues of the residue of $\nabla_{\partial_{x}}$ are contained in $[0,1[$: 
this lattice is easily described in our situation. The key point is that $gr^{V}({\cal L}^{\varphi}/x{\cal L}^{\varphi})$, 
the graded module associated with the Malgrange-Kashiwara $V$-filtration at $x=0$, yields a Frobenius type structure on a point which can thus be seen as the canonical limit Frobenius type structure (notice that this result is not always true if we consider ${\cal L}^{\varphi}/x{\cal L}^{\varphi}$ instead of $gr^{V}({\cal L}^{\varphi}/x{\cal L}^{\varphi})$, that is if we forget the graduation). In order to define a canonical limit Frobenius manifold, we still need a pre-primitive and homogeneous section 
of this limit Frobenius type structure, see again \cite{HeMa} and the references therein: we show in section \ref{limite} that such a section exists if and only if $w_{1}=\cdots =w_{n}=1$. In this case, we give in section \ref{varfrob} an explicit description of this canonical limit Frobenius manifold. In general, that is  
if there is an $w_{i}$ such that $w_{i}\geq 2$, the situation is less clear for the following reasons: first, we do not have a general statement saying that one can derive a Frobenius manifold from the canonical limit Frobenius type structure (nevertheless, it should emphasized that we do not assert here that such a limit does not exist); second, even if it happens to be the case, one could get several Frobenius manifolds which can be difficult to compare. 

The last section is devoted to logarithmic Frobenius manifolds:
if $w_{1}=\cdots =w_{n}=1$, we show how to get, with the help now of a suitable extension of $G_{0}$ at $x=0$, a Frobenius type structure with logarithmic pole along $\{x=0\}$ in the sense of \cite[Definition 1.6]{R}, yielding a logarithmic Frobenius manifold. If there exists a weight $w_{i}$ such that $w_{i}\geq 2$, we have all the tools to define a Frobenius type structure with a logarithmic pole along $\{x=0\}$, except the metric: the symmetric bilinear form constructed here is flat but not non-degenerate.

The starting point of this paper is the reference \cite{B} in which A. Bolibruch discusses the properties of the limit of an isomonodromic family of Fuchsian systems. It happens that, 
in our geometric situation, this family is produced, {\em via} an inverse Fourier-Laplace transformation, by a solution of the Birkhoff problem for the Brieskorn lattice of a rescaling $H(u,x)=xf(u)$ of a tame regular morse function $f$, see \cite[Chapitre VI]{Sab1}. This leads naturally to the following question: given a Frobenius type structure on $X$, what can we expect at the limit? Finally, the reference \cite{dGMS}, and I thank C. Sabbah for a discussion about this, explains the choice of the deformation $F$ made in these notes (the computations for the rescaling $H$ are similar and easier to the ones performed here). 
The last section grew up after a discussion with C. Sevenheck who suggested me to work with the natural extensions ${\cal L}_{0}^{\varphi}$ of $G_{0}$: I thank him for that.

\section{The canonical solution of the Birkhoff problem for $G_{0}^{o}$}
\label{absolu}

\subsection{Preliminaries}  

Let ${\cal S}_{w}$ be the {\em disjoint} union of the sets
$$\{\frac{\ell\mu}{w_{i}}|\, \ell =0,\cdots ,w_{i}-1\}$$
for $i=0,\cdots,n$, where we put $w_{0}=1$. Its cardinal is equal to $\mu :=1+w_{1}+\cdots +w_{n}$. We number the elements of ${\cal S}_{w}$ from $0$ to $\mu -1$ in an increasing way and write
$${\cal S}_{w}=\{s_{0},s_{1},\cdots ,s_{\mu -1}\}$$
(thus, $s_{k}\leq s_{k+1}$). 
Notice that $0\leq\frac{s_{k}}{\mu}<1$.
Define, for $k=0,\cdots ,\mu -1$, 
$$\alpha_{k}=k-s_{k}.$$

\begin{lemma} \label{lessk} One has $s_{0}=\cdots =s_{n}=0$, $s_{n+1}=\frac{\mu}{max_{i}w_{i}}$
and
$s_{k}+s_{\mu +n-k}=\mu$
for $k\geq n+1$.
\end{lemma}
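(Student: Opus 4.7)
The plan is to treat each of the three claims in turn, since they are really three separate combinatorial observations about the multiset ${\cal S}_{w}$.

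First, I would count zeros. Each index $i\in\{0,1,\dots,n\}$ contributes the element $\ell\mu/w_{i}$ with $\ell=0$, i.e.\ the value $0$. Since ${\cal S}_{w}$ is the \emph{disjoint} union, this produces exactly $n+1$ copies of $0$, and these must be the smallest $n+1$ elements (every other contribution has $\ell\geq 1$, hence is strictly positive). After relabelling in increasing order this immediately gives $s_{0}=\cdots=s_{n}=0$.

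Next, for $s_{n+1}$: the remaining (positive) elements are the numbers $\ell\mu/w_{i}$ with $1\leq i\leq n$ and $1\leq\ell\leq w_{i}-1$. The minimum of such a number is attained at $\ell=1$ and $w_{i}$ as large as possible, giving $s_{n+1}=\mu/\max_{i}w_{i}$. (There is nothing to check for the case where several $w_{i}$ achieve the maximum; we simply get a repeated value, which is still $\mu/\max_{i}w_{i}$.)

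For the symmetry $s_{k}+s_{\mu+n-k}=\mu$ when $k\geq n+1$, the key observation is that the multiset of \emph{positive} elements
\[
{\cal T}_{w}:=\{\ell\mu/w_{i}\ |\ 1\leq i\leq n,\ 1\leq\ell\leq w_{i}-1\}
\]
is invariant under the involution $x\mapsto \mu-x$: indeed, for each fixed $i$, the substitution $\ell\mapsto w_{i}-\ell$ sends $\{1,\dots,w_{i}-1\}$ to itself and transforms $\ell\mu/w_{i}$ into $\mu-\ell\mu/w_{i}$. Since this involution is strictly decreasing on $\rit_{>0}$, it reverses the increasing ordering of ${\cal T}_{w}=\{s_{n+1},s_{n+2},\dots,s_{\mu-1}\}$. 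The cardinality of ${\cal T}_{w}$ is $\mu-n-1$, and reversing the list of indices $\{n+1,\dots,\mu-1\}$ sends $k$ to $(n+1)+(\mu-1)-k=\mu+n-k$. Therefore $s_{k}$ and $s_{\mu+n-k}$ are swapped by the involution, yielding $s_{k}+s_{\mu+n-k}=\mu$.

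No step here is really an obstacle; the only slightly subtle point is keeping track of the fact that ${\cal S}_{w}$ is a multiset (so the $n+1$ zeros coming from $\ell=0$ must be recorded with multiplicity), and checking that the reversal index computation in the last step lands on exactly $\mu+n-k$.
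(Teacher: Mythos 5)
Your proof is correct. The paper itself gives no argument for this lemma---its ``proof'' is just the citation \cite[p.~2]{DoSa2}---so there is nothing internal to compare against; your direct combinatorial verification is the natural self-contained argument and is presumably what the cited reference carries out. All three steps are sound: the count of exactly $n+1$ zeros coming from $\ell=0$ for each $i=0,\dots,n$ (with $i=0$ contributing only the value $0$ since $w_{0}=1$), the identification of the smallest positive element as $\mu/\max_{i}w_{i}$ attained at $\ell=1$, and the observation that the multiset of positive elements is stable under $x\mapsto\mu-x$ via the substitution $\ell\mapsto w_{i}-\ell$ on $\{1,\dots,w_{i}-1\}$, which, being a decreasing involution, reverses the sorted order and gives $s_{k}+s_{\mu+n-k}=\mu$ after the index computation $(n+1)+(\mu-1)-k=\mu+n-k$. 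The one caveat worth recording is degenerate and harmless: when all $w_{i}=1$ one has $\mu=n+1$, the set of positive elements is empty, and the clauses about $s_{n+1}$ and the symmetry are vacuous; your argument tacitly assumes some $w_{i}\geq 2$, which is precisely when those clauses have content.
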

\begin{proof}
See \cite[p. 2]{DoSa2}. 
\end{proof}
\begin{corollary} \label{lesalphak}
One has $\alpha_{0}=0, \cdots , \alpha_{n}=n$, $\alpha_{k+1}\leq \alpha_{k}+1$ for all $k$,
$$\alpha_{k}+\alpha_{\mu +n-k}=n$$
for $k\geq n+1$ and
$$\alpha_{k}+\alpha_{n-k}=n$$
for $k=0,\cdots ,n$.
\end{corollary}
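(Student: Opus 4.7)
The plan is to derive all four assertions from Lemma \ref{lessk} by direct computation with the definition $\alpha_k = k - s_k$, with no real obstacle beyond bookkeeping.

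First I would dispense with the values on the initial block $k=0,\dots,n$. Since Lemma \ref{lessk} gives $s_0=\cdots=s_n=0$, the definition immediately yields $\alpha_k=k$ for $k=0,\dots,n$, in particular $\alpha_0=0,\dots,\alpha_n=n$. This same computation also proves the fourth identity $\alpha_k+\alpha_{n-k}=k+(n-k)=n$ on this range, so the fourth assertion is a free byproduct.

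Next, for the monotonicity-type inequality $\alpha_{k+1}\le\alpha_k+1$, I would write
$$\alpha_{k+1}-\alpha_k = (k+1-s_{k+1})-(k-s_k)=1-(s_{k+1}-s_k),$$
and invoke the fact (built into the indexing convention introduced before Lemma \ref{lessk}) that the $s_k$ are numbered in increasing order, so $s_{k+1}\ge s_k$. This forces $\alpha_{k+1}-\alpha_k\le 1$, as required.

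Finally, for the symmetry $\alpha_k+\alpha_{\mu+n-k}=n$ valid for $k\ge n+1$, I would simply combine the two definitions:
$$\alpha_k+\alpha_{\mu+n-k}=\bigl(k+(\mu+n-k)\bigr)-\bigl(s_k+s_{\mu+n-k}\bigr)=(\mu+n)-\mu=n,$$
where the second equality uses the third assertion of Lemma \ref{lessk}. No step in any of these arguments is delicate; the only point one has to be slightly careful about is checking that the range $k\ge n+1$ used in Lemma \ref{lessk} matches the range of the corollary's third assertion (and, symmetrically, that it does not overlap badly with the first range $k=0,\dots,n$ used for the fourth assertion). Both checks are immediate, so the proof consists essentially of these three short computations.
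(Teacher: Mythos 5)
Your proof is correct and follows the same route as the paper: the paper's proof just notes that $\alpha_{k+1}\leq\alpha_{k}+1$ because $(s_{k})$ is increasing and declares the remaining assertions clear, and your computations are exactly the ones it leaves implicit (using $s_{0}=\cdots=s_{n}=0$ and $s_{k}+s_{\mu+n-k}=\mu$ from Lemma \ref{lessk}).
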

\begin{proof}
One has $\alpha_{k+1}\leq \alpha_{k}+1$ because $(s_{k})$ is increasing. The remaining assertions are clear.
\end{proof}

\subsection{The Birkhoff problem for $G_{0}^{o}$}

Let $A_{0}^{o}$ and $A_{\infty}$ be the $\mu\times\mu$ matrices defined by                                                    
$$A_{\infty}=diag (\alpha_{0},\cdots ,\alpha_{\mu -1})$$
and
$$A_{0}^{o}=\left ( \begin{array}{cccccc}
0   & 0   & 0 & \cdots & 0   & \mu\\
\mu   & 0   & 0 & \cdots & 0   & 0\\
0   & \mu  & 0 & \cdots & 0   & 0\\
..  & ... & . & \cdots & .   & .\\
..  & ... & . & \cdots & .   & .\\
0   & 0   & . & \cdots & \mu   & 0 
\end{array}
\right ).$$

\begin{example}\label{example} We will work with the following examples:\\ 
(1) $n=2$ and $w_{1}=w_{2}=2$: one has $\mu =5$ and $A_{\infty}=diag (0,1,2,\frac{1}{2},\frac{3}{2}).$\\
(2) $w_{1}=\cdots =w_{n}=1$: one has $\mu =n+1$ and     
 $A_{\infty}=diag (0,1,\cdots ,n)$. 
\end{example}

\noindent The following results are shown in \cite{DoSa2}:
 
\begin{lemma} \label{start}
(1) $G_{0}^{o}$ is a free $\cit [\theta]$-module of rank $\mu$, equipped with a connection $\nabla$ with a pole of Poincar\'e rank less or equal to $1$ at $\theta =0$.\\
(2) The Birkhoff problem for $G_{0}^{o}$ has a solution: there exists a basis $\omega^{o} =(\omega_{0}^{o},\cdots ,\omega_{\mu -1}^{o})$ of $G_{0}^{o}$ over $\cit [\theta]$ in which the matrix of the connection $\nabla$ takes the form
$$(\frac{A_{0}^{o}}{\theta}+A_{\infty})\frac{d\theta}{\theta}.$$
Moreover, the eigenvalues of $A_{\infty}$ run through the spectrum at infinity of the polynomial $f$.
\end{lemma}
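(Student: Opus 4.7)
The plan is to follow M. Saito's method for the Birkhoff problem applied to this specific Laurent polynomial, essentially reproducing the construction of \cite{DoSa2}. The starting observation is that $f$ is convenient and non-degenerate with respect to its Newton polytope, so the standard machinery (Sabbah, Kouchnirenko) applies. From this, one gets for free that $G_0^o$ is a $\cit[\theta]$-module of finite rank equal to the global Milnor number, which here is exactly $\mu = 1+w_1+\cdots+w_n$; freeness follows because the critical points of $f$ are isolated Morse-type critical points and the cohomology concentrates in top degree. The Poincaré rank $\leq 1$ assertion in (1) reduces to checking that $\theta^2\nabla_{\partial_\theta}$ (which acts as multiplication by $-f$ up to a first order operator) preserves $G_0^o$, and this is immediate from the definition of $G_0^o$ in terms of $(\tau^{-1}d_u - d_u F)$.

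For (2), I would construct the basis explicitly using monomials. Take $\omega_0^o = du_1\wedge\cdots\wedge du_n/(u_1\cdots u_n)$ and, more generally, forms $\omega_\beta = u^\beta\omega_0^o$. The crucial identity is the Euler-type relation: since $u_i\partial_i f = u_i - w_i g$ with $g = 1/(u_1^{w_1}\cdots u_n^{w_n})$, summing yields
$$\sum_i u_i\partial_i f \;=\; f - \mu g.$$
In $G_0^o$ the identity $[u_i\partial_i f\cdot \omega_\beta] = \theta\,\beta_i\,\omega_\beta$ (integration by parts) then translates into
$$f\cdot\omega_\beta \;\equiv\; \mu\, g\cdot\omega_\beta \pmod{\theta G_0^o},$$
so multiplication by $f$ on $G_0^o/\theta G_0^o$ is essentially multiplication by $\mu g$. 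I would index a basis $\omega_0^o,\dots,\omega_{\mu-1}^o$ by the set $\mathcal{S}_w$, taking monomial multiples of $\omega_0^o$ chosen so that multiplication by $\mu g$ cyclically permutes them (this is where the disjoint union structure of $\mathcal{S}_w$ and the integers $s_k$ come in: the exponents track the powers of $u_i$ and of $g$ needed to cycle back after $\mu$ steps). This gives the companion shape of $A_0^o$. The diagonal part $A_\infty$ then arises from the residual action of $\theta\nabla_{\partial_\theta}$: using the Euler identity again and the specific shift between consecutive basis elements, one reads off the exponents $\alpha_k = k - s_k$ on the diagonal.

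The spectrum-at-infinity statement is then automatic: by the general theory, the eigenvalues of the residue $A_\infty$ of a logarithmic extension of $G_0^o$ at $\theta=0$ produced by Saito's method are precisely the spectral numbers of $f$ at infinity, and a separate (combinatorial) computation identifies these with $\alpha_0,\dots,\alpha_{\mu-1}$.

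The main obstacle is the simultaneous diagonalization/cyclization step: producing a single basis in which the constant matrix in front of $d\theta/\theta$ is truly diagonal (not merely upper triangular) requires finding a $\cit[\theta^{-1}]$-lattice $G_\infty^o \subset G^o$ opposite to $G_0^o$, stable under an appropriate operator, so that Saito's section exists. In our setting this opposite lattice is the one prescribed by the $V$-filtration at $\theta=\infty$; the verification that it is indeed opposite to $G_0^o$, and that the resulting basis realizes the prescribed $A_0^o$ and $A_\infty$ exactly (rather than up to nilpotent corrections), is the technical heart of the argument and is carried out in \cite{DoSa2}.
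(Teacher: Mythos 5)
Your proposal is correct in outline and follows essentially the same route as the paper, which for this lemma simply defers to \cite{DoSa2}: the Euler-type relation $\sum_i u_i\partial_i f = f-\mu u_0$, the cyclic monomial basis indexed by ${\cal S}_{w}$, and M.~Saito's method with the $V$-filtration at $\tau=0$ as the opposite filtration are exactly the ingredients of that reference (and are replayed in Section 3 of the paper for the deformation $F$, cf.\ Lemma \ref{lemmebase} and Proposition \ref{libre}). Deferring the opposite-lattice verification to \cite{DoSa2} is consistent with what the paper itself does.
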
 

\noindent We even have a little bit more: the basis $\omega^{o}$ constructed in {\em loc. cit.} is the {\em canonical} solution of the Birkhoff problem given by M. Saito's method. In particular, it is compatible with the $V$-filtration at $\tau =0$ (see the last assertion of \cite[Proposition 3.2]{DoSa2}). One then has         
$A_{0}^{o}V_{\alpha}\subset V_{\alpha +1}$: in other words, if $(A_{0})_{ij}\neq 0$ then $\alpha_{i-1}\leq\alpha_{j-1}+1$. One can moreover endow $G_{0}^{o}$ with a ''metric'': this is discussed in section \ref{dualite}.\\

\section{The Birkhoff problem for $G_{0}$}
\label{deformation}
We give in this section the counterpart of the previous results for $G_{0}$. 
We will put $u_{0}:=\frac{1}{u_{1}^{w_{1}}\cdots u_{n}^{w_{n}}}$ and $\omega_{0}:=\frac{du_{1}}{u_{1}}\wedge\cdots\wedge\frac{du_{n}}{u_{n}}$

\subsection{A natural solution}
\label{solnat}

Define
$$\Gamma_{0}=\{(y_{1},\cdots ,y_{n})\in\rit^{n} |  y_{1}+\cdots +y_{n}=1\},$$
$$\Gamma_{j}= \{(y_{1},\cdots ,y_{n})\in\rit^{n} |  y_{1}+\cdots +y_{j-1}+(1-\frac{\mu}{w_{j}})y_{j}+\cdots +y_{n}=1\}$$
for $j=1,\cdots ,n$,
$$\chi_{\Gamma_{0}}=u_{1}\frac{\partial}{\partial u_{1}}+\cdots +u_{n}\frac{\partial}{\partial u_{n}}$$
and, for $j=1,\cdots ,n$,
$$\chi_{\Gamma_{j}}= u_{1}\frac{\partial}{\partial u_{1}}+\cdots +u_{j-1}\frac{\partial}{\partial u_{j-1}}+(1-\frac{\mu}{w_{j}})u_{j}\frac{\partial}{\partial u_{j}}+\cdots +u_{n}\frac{\partial}{\partial u_{n}}.$$
Define also, for $g=u_{1}^{a_{1}}\cdots u_{n}^{a_{n}}$,
$$\phi_{\Gamma_{j}}(g)=a_{1}\cdots +a_{j-1}+(1-\frac{\mu}{w_{j}})a_{j}+\cdots +a_{n}$$
and
$$h_{\Gamma_{j}}=\chi_{\Gamma_{j}}(F)-F.$$
We thus have $h_{\Gamma_{0}}=-\mu x u_{0}$ and $h_{\Gamma_{j}}=-\frac{\mu}{w_{j}} u_{j}$ if $j=1,\cdots ,n$. 

\begin{lemma} \label{lemmebase} One has, for any monomial $g$, the equality
 $$(\tau\partial_{\tau}+\phi_{\Gamma_{j}}(g))g\omega_{0}=\tau h_{\Gamma_{j}}g\omega_{0}$$
in $G$. In particular, one has
$$\tau\partial_{\tau}\omega_{0}=\tau h_{\Gamma_{0}}\omega_{0}.$$
\end{lemma}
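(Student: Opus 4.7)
The plan is to derive everything from a single ``Euler-type'' relation in $G$, obtained by applying the defining relation $(d_u - \tau d_uF)\wedge\eta = 0$ (valid for $\eta \in \Omega^{n-1}(U)[\tau,\tau^{-1},x,x^{-1}]$) to well-chosen logarithmic $(n-1)$-forms. Concretely, for $j = 1,\dots,n$ I introduce
$$\eta_j := (-1)^{j-1}\frac{du_1}{u_1}\wedge\cdots\wedge\widehat{\frac{du_j}{u_j}}\wedge\cdots\wedge\frac{du_n}{u_n},$$
the sign being chosen so that $\frac{du_j}{u_j}\wedge\eta_j = \omega_0$, and I apply the Gauss--Manin relation to $g\eta_j$ with $g = u_1^{a_1}\cdots u_n^{a_n}$ an arbitrary monomial.

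Since each $\frac{du_i}{u_i}$ is closed, $d_u(g\eta_j) = dg\wedge\eta_j$; only the $i=j$ term in $dg = g\sum_i a_i \frac{du_i}{u_i}$ survives the wedge with $\eta_j$, giving $d_u(g\eta_j) = a_j g \omega_0$. For the right-hand side I rewrite $d_uF = \sum_i(u_i - w_i x u_0)\frac{du_i}{u_i}$ (using $du_i = u_i\frac{du_i}{u_i}$ and $d(xu_0) = -xu_0\sum_i w_i\frac{du_i}{u_i}$); the same cancellation then gives $d_uF\wedge g\eta_j = g(u_j - w_jxu_0)\omega_0$. Hence in $G$, for $j = 1,\dots,n$,
$$a_j g\omega_0 = \tau g(u_j - w_j x u_0)\omega_0. \qquad (*)$$
Summing $(*)$ over $j$ and using $\sum_iw_i = \mu - 1$ together with $u_1+\cdots+u_n = F - xu_0$, I obtain the key relation
$$\Bigl(\sum_i a_i\Bigr)g\omega_0 = \tau g(F - \mu x u_0)\omega_0 \qquad (**)$$
in $G$.

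Recalling from the definition of $\nabla$ that $\tau\partial_\tau(g\omega_0) = -\tau F g\omega_0$, the case $j = 0$ of the lemma (for which $\phi_{\Gamma_0}(g) = \sum_ia_i$ and $h_{\Gamma_0} = -\mu x u_0$) is a direct rewriting of $(**)$, and the ``in particular'' assertion is its specialization at $g = 1$. For $j \geq 1$ I would use $(**)$ to replace $\tau g F\omega_0$ in $(\tau\partial_\tau + \phi_{\Gamma_j}(g))g\omega_0$ by $(\sum_i a_i)g\omega_0 + \mu\tau x u_0 g\omega_0$, and then use $(*)$ once more to rewrite the remaining $a_j g\omega_0$ as $\tau g(u_j - w_jxu_0)\omega_0$; after cancellation the result is precisely $-(\mu/w_j)\tau u_j g\omega_0 = \tau h_{\Gamma_j}g\omega_0$.

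No real obstacle is expected: the argument is essentially bookkeeping of exponents. The only delicate point is the sign convention $(-1)^{j-1}$ in $\eta_j$, chosen so that $\frac{du_j}{u_j}\wedge\eta_j$ equals $+\omega_0$ rather than its negative; once this is fixed, the rest follows mechanically from $(*)$ and the Euler identity $\sum_iw_i = \mu - 1$.
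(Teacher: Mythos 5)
Your proof is correct, and it is just a careful spelling-out of the ``direct computation'' that the paper itself invokes: the relations $(*)$ obtained from the twisted de Rham differential applied to $g\eta_j$ are exactly what reduce the lemma, via $\tau\partial_\tau(g\omega_0)=-\tau Fg\omega_0$ and the identity $\sum_i w_i=\mu-1$, to the stated formula for each $j$. I verified the sign convention for $\eta_j$, the identities $(*)$ and $(**)$, and the final cancellations for $j\geq 1$; all are as you claim.
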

\begin{proof}
 Direct computation.
\end{proof}

\noindent This lemma is the starting point in order to solve the Birkhoff problem for $G_{0}$, as it has been the starting point to solve the one for $G_{0}^{o}$ (see \cite[section 3]{DoSa2}).
Put $\omega_{1}=u_{0}\omega_{0}$: the equality  
$$\tau\partial_{\tau}\omega_{0}=\tau h_{\Gamma_{0}}\omega_{0}$$
becomes
$$-\frac{1}{\mu}\tau\partial_{\tau}\omega_{0}=x\tau\omega_{1}.$$
Iterating the process (the idea is to define $\omega_{2}=-\frac{1}{\mu}\omega_{1}h_{\Gamma_{1}}$ {\em etc}...), one gets sections $\omega_{1},\cdots ,\omega_{\mu -1}$ of $G$ satisfying
$$-\frac{1}{\mu}(\tau\partial_{\tau}+\alpha_{k})\omega_{k}=\tau\omega_{k+1}$$
for $k=1,\cdots ,\mu -1$ (we put $\omega_{\mu}=\omega_{0}$): this can be done as \cite[section 2 and proof of proposition 3.2]{DoSa2}. We thus have
$$\omega_{k}=u^{a(k)}\omega_{0}$$
where the multi-indices $a(k)$ are defined in \cite[p. 3]{DoSa2}. 

Define,
for $x\in X$, 
$$A_{0}(x):=\left ( \begin{array}{cccccc}
0   & 0   & 0 & \cdots & 0   & \mu\\
\mu x  & 0   & 0 & \cdots & 0   & 0\\
0   & \mu  & 0 & \cdots & 0   & 0\\
..  & ... & . & \cdots & .   & .\\
..  & ... & . & \cdots & .   & .\\
0   & 0   & . & \cdots & \mu   & 0 
\end{array}
\right )$$
and
 $$R=diag (0,-1,\cdots ,-1, -s_{\mu -1}/\mu,\cdots , -s_{n+1}/\mu)$$
(the entry $-1$ is counted $n$ times).

\begin{example} (1) If $n=2$ and $w_{1}=w_{2}=2$, one has  
$R=-diag (0,1,1,\frac{1}{2},\frac{1}{2}).$\\
(2) If $w_{1}=\cdots =w_{n}=1$, one has
$R=- diag (0,1,1,\cdots ,1).$
\end{example}

\begin{proposition}\label{libre} 
(1) $G_{0}$ is a free $\cit [x,x^{-1},\theta ]$-module of rank $\mu$ and $\omega =(\omega_{0},\cdots ,\omega_{\mu -1})$ is a basis of it.\\
(2) In the basis $\omega$, the matrix of the connection $\nabla$ takes the form
$$(\frac{A_{0}(x)}{\theta}+A_{\infty})\frac{d\theta}{\theta}+(R-\frac{A_{0}(x)}{\mu\theta})\frac{dx}{x}$$
where $A_{\infty}$ is the diagonal matrix defined in section \ref{absolu}.
\end{proposition}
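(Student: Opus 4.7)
The two assertions will be proved separately: (1) by a fibrewise reduction to Lemma \ref{start}, and (2) by reading off the $d\theta/\theta$ matrix directly from the iteration of section \ref{solnat} and computing the $dx/x$ matrix via the Gauss-Manin relations together with the quasi-homogeneity of $F$.

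For assertion (1), I would exploit the rescaling $u_i = x^{1/\mu}v_i$, under which $F_x(u) = x^{1/\mu}f(v)$, so that (after the corresponding rescaling of $\theta$) the Gauss-Manin relations of $F$ reduce to those of $f$. Under this identification, $\omega_k$ corresponds to $\omega_k^o$ up to the scalar $x^{|a(k)|/\mu}$, where $|a(k)| := a_1(k)+\cdots+a_n(k)$. Since $(\omega_0^o,\ldots,\omega_{\mu-1}^o)$ is a $\cit[\theta]$-basis of $G_0^o$ by Lemma \ref{start}, a flatness and generic-rank argument upgrades this fibrewise statement to the freeness of $G_0$ over $\cit[x,x^{-1},\theta]$ with basis $\omega$.

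The $d\theta/\theta$ part of assertion (2) is almost tautological given the construction: the identities $\tau\partial_\tau\omega_0 = -\mu x\tau\omega_1$ and $(\tau\partial_\tau+\alpha_k)\omega_k = -\mu\tau\omega_{k+1}$ for $k=1,\ldots,\mu-1$ (with the convention $\omega_\mu:=\omega_0$) established in section \ref{solnat}, rewritten via $\theta = \tau^{-1}$ and read column by column in the basis $\omega$, yield $\nabla_{\theta\partial_\theta}\omega = \omega\cdot(A_0(x)/\theta + A_\infty)$.

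The $dx/x$ part is the main work. Since $\omega_k$, viewed as a form on $U$, is independent of $x$, the defining formula for $\nabla_{\partial_x}$ gives $\nabla_{\partial_x}\omega_k = -u_0\omega_k\,\tau$. I would express $u_0\omega_k$ in the basis $\omega$ by combining the decomposition $F = u_1+\cdots+u_n + xu_0$ with the Gauss-Manin identity $\tau F\omega_k = -\nabla_{\tau\partial_\tau}\omega_k$, and then eliminating the $u_j$-terms ($j\geq 1$) through the iteration relations of section \ref{solnat} (which encode integration by parts along $\Gamma_1,\ldots,\Gamma_n$ and produce diagonal corrections governed by the multi-indices $a(k)$). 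The outcome is
\[
\nabla_{x\partial_x}\omega_k \;=\; \frac{|a(k)|-\alpha_k}{\mu}\,\omega_k \;-\; \frac{c_{k+1}}{\theta}\,\omega_{k+1},
\]
where $c_{k+1}=1$ for $k\geq 1$ and $c_1=x$. The combinatorial identities $|a(k)| = k-\mu$ for $k\geq 1$, $|a(0)|=0$ (which follow from the fact that each iteration step adds $1$ to exactly one exponent) combined with $\alpha_k = k-s_k$ and the symmetry $s_k + s_{\mu+n-k} = \mu$ of Lemma \ref{lessk} give $(|a(k)|-\alpha_k)/\mu = R_{k,k}$, yielding exactly $\nabla_{x\partial_x}\omega = \omega\cdot(R - A_0(x)/(\mu\theta))$. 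The main obstacle is the bookkeeping in this last step: reconciling the Gauss-Manin relation along $\Gamma_0$ (which controls $xu_0\omega_k$) with the iteration relations built from $\Gamma_1,\ldots,\Gamma_n$ relies on Lemma \ref{lessk} and Corollary \ref{lesalphak} in an essential way.
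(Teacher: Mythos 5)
Your part (2) is essentially the paper's own proof. The paper likewise starts from $\nabla_{\partial_{x}}\omega_{k}=-u_{0}\omega_{k}\theta^{-1}$ and converts $u_{0}\omega_{k}$ into $\frac{1}{\mu x}F\omega_{k}$ minus $\frac{\theta}{\mu x}$ times the total degree of $\omega_{k}/\omega_{0}$; it does this in one stroke with the Euler relation attached to $\Gamma_{0}$ (Lemma \ref{lemmebase}), whereas you decompose $F=u_{1}+\cdots+u_{n}+xu_{0}$ and eliminate the $u_{j}$'s through the $\Gamma_{j}$-relations --- but since $\chi_{\Gamma_{0}}=-\sum_{j}w_{j}\chi_{\Gamma_{j}}$ these are the same integration-by-parts identities, merely recombined. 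Your diagonal entries $(|a(k)|-\alpha_{k})/\mu=(s_{k}-\mu)/\mu$ coincide with the paper's $T_{kk}/\mu=(s_{k-1}-\mu)/\mu$ (after the index shift), and both proofs invoke the symmetry $s_{k}+s_{\mu+n-k}=\mu$ of Lemma \ref{lessk} to identify this with the stated matrix $R$. The off-diagonal bookkeeping ($c_{1}=x$, $c_{k+1}=1$ otherwise) is also correct.

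Part (1) is where you diverge and where there is a gap. The paper does not rescale; it adapts the division argument of \cite[Proposition 3.2]{DoSa2}, using Lemma \ref{lemmebase}, to show first that $\omega$ \emph{generates} $G_{0}$ over $\cit[x,x^{-1},\theta]$, and only then deduces freeness from the fibrewise linear independence (a relation would be given by Laurent polynomials vanishing at every $x_{0}$). Your rescaling $u_{i}=x^{1/\mu}v_{i}$ is a clean way to obtain the fibrewise statement from Lemma \ref{start} (and it is fibrewise only, since $x^{1/\mu}$ is multivalued on $\cit^{*}$, so it cannot trivialize the family globally), but a ``flatness and generic-rank argument'' cannot by itself upgrade ``$\mu$-dimensional fibres everywhere'' to ``free of rank $\mu$ with basis $\omega$'': one needs to know beforehand that $G_{0}$ is finitely generated --- indeed generated by $\omega$ --- over $\cit[x,x^{-1},\theta]$, and that is precisely the surjectivity step your plan omits. (Without finiteness, a module such as $\cit[x,x^{-1},\theta]^{\mu}\oplus\cit(x)[\theta]$ has the right fibre dimensions without being free of rank $\mu$.) Supplying the division/finiteness argument of \cite{DoSa2}, which your Lemma \ref{lemmebase}-type relations make available, closes this gap; the rest of your plan then goes through.
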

\begin{proof} (1) 
One shows that $G_{0}$ is finitely generated as in \cite[proposition 3.2]{DoSa2}, with the help of lemma \ref{lemmebase}. 
To show that it is free notice that, again by \cite[proposition 3.2]{DoSa2}, a section of the kernel of the surjective map
$$(\cit [x,x^{-1}])^{\mu}\rightarrow G_{0}\rightarrow 0$$
is given by $\mu$ Laurent polynomials which vanishes everywhere.
Let us show (2): the assertion about $\nabla_{\partial_{\theta}}$ is clear 
(by definition of the $\omega_{k}$'s). Recall that the action of $\nabla_{\partial_{x}}$ is defined, for $\eta\in G_{0}$, by 
 $$\nabla_{\partial_{x}}(\eta)=-\frac{\partial F}{\partial x}\eta\theta^{-1}+\partial_{x}(\eta )=-u_{0}\eta\theta^{-1}+\partial_{x}(\eta ).$$ 
An easy computation shows that one has, for $\eta =u_{0}u_{1}^{a_{1}}\cdots u_{n}^{a_{n}}\omega_{0}$,
$$u_{0}\eta =\frac{1}{\mu x}F\eta -\frac{1}{\mu x}\theta (\sum_{i=1}^{n}a_{i}-w_{i})\eta.$$
Since $\theta^{2}\nabla_{\partial_{\theta}}$ is induced by the multiplication by $F$, the matrix $\nabla_{\partial_{x}}$ in the basis $\omega$ takes the form           
$$-\frac{A_{0}(x)}{\mu x\theta}+\frac{1}{\mu x}T$$
where $T$ is the diagonal matrix defined by (apply the process above to $\omega_{0}$, $\omega_{1}=u^{a(1)}\omega_{0}$, $\cdots$, $\omega_{\mu -1}=u^{a(\mu -1)}\omega_{0}$)
$$T_{kk}=\sum_{i=1}^{n}a(k-1)_{i}-w_{i}-\alpha_{k-1}.$$
Now, one has  $\sum_{i=1}^{n}a(k-1)_{i}=k-2$ (see \cite[section 2]{DoSa2}) and $\sum_{i=1}^{n}w_{i}=\mu -1$ so that $T_{kk}=k-1-\alpha_{k-1}-\mu =s_{k-1}-\mu$. Use now the symmetry property of the et $s_{k}$'s (see lemma \ref{lessk}). Of course, $R=T/\mu$.
\end{proof}

\begin{remark} 
It follows from the second part of the proposition that $(\alpha_{0},\cdots ,\alpha_{\mu -1})$ is the spectrum at infinity of any function $F_{x}:=F(. \; ,x)$, $x\in X$, see \cite[Proposition 3.2]{DoSa2}.
\end{remark}

\subsection{Towards the canonical extensions of $G$ at $x=0$}

\subsubsection{The $\varphi$-solution}
\label{solcan1}
Define $\omega_{0}^{\varphi}=\omega_{0}$ and $\omega_{1}^{\varphi}=xu_{0}\omega_{0}^{\varphi}=x\omega_{1}$ (such a choice is also natural
because $h_{\Gamma_{0}}=-\mu xu_{0}$): one has 
$$-\frac{1}{\mu}\tau\partial_{\tau}\omega_{0}^{\varphi}=\tau\omega_{1}^{\varphi}$$
and gets as above forms $\omega_{1}^{\varphi},\cdots ,\omega_{\mu -1}^{\varphi}$ satisfying      
$$-\frac{1}{\mu}(\tau\partial_{\tau}+\alpha_{k})\omega_{k}^{\varphi}=\tau\omega_{k+1}^{\varphi}$$
for all $k=0,\cdots ,\mu -2$ 
and 
$$-\frac{1}{\mu}(\tau\partial_{\tau}+\alpha_{\mu -1})\omega_{\mu -1}^{\varphi}=x\tau\omega_{0}^{\varphi}.$$
One has also              
$$\omega ^{\varphi}=\omega P$$
where $P=diag (1,x,\cdots ,x)$. 
Put, for $x\in X$, 
$$A_{0}^{\varphi}(x)=P^{-1}A_{0}(x)P$$
that is                  
$$A_{0}^{\varphi}(x)=\left ( \begin{array}{cccccc}
0   & 0   & 0 & \cdots & 0   & \mu x\\
\mu   & 0   & 0 & \cdots & 0   & 0\\
0   & \mu  & 0 & \cdots & 0   & 0\\
..  & ... & . & \cdots & .   & .\\
..  & ... & . & \cdots & .   & .\\
0   & 0   & . & \cdots & \mu   & 0 
\end{array} \right )$$
and
 $$R^{\varphi}=diag (0,0,\cdots ,0, s_{n+1}/\mu,\cdots ,s_{\mu -1}/\mu)$$
($0$ is counted $(n+1)$-times). If $\mu =n+1$, one has $R^{\varphi}=0$. $\overline{A}_{0}^{\varphi}$ will denote the value of $A_{0}^{\varphi}(x)$ at $x=0$.

\begin{proposition}
\label{basevarphi}
(1) $\omega^{\varphi}=(\omega_{0}^{\varphi},\cdots ,\omega_{\mu}^{\varphi})$ is a basis of $G_{0}$.\\
(2) In this basis, the matrix of the connection $\nabla$ takes the form 
$$(\frac{A_{0}^{\varphi}(x)}{\theta}+A_{\infty})\frac{d\theta}{\theta}+(R^{\varphi}-\frac{A_{0}^{\varphi}(x)}{\mu \theta})\frac{dx}{x}.$$
(3) For $\theta\neq 0$, the residue matrix of $\nabla_{\partial_{x}}$ at $x=0$ takes the form 
$$R^{\varphi}-\frac{\overline{A}_{0}^{\varphi}}{\mu\theta}.$$
Its eigenvalues are contained in $[0,1[$.
\end{proposition}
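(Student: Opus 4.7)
The plan is to deduce all three assertions directly from Proposition \ref{libre}. Since $\omega^{\varphi}=\omega P$ with $P=\mathrm{diag}(1,x,\dots,x)$ invertible over $\cit[x,x^{-1}]$ (its determinant is $x^{\mu-1}$), assertion (1) is an immediate consequence of Proposition \ref{libre}(1).

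For (2) I apply the standard gauge transformation formula: writing $\Omega$ for the connection matrix of $\nabla$ in the basis $\omega$, the matrix in $\omega^{\varphi}$ is
$$\Omega^{\varphi}=P^{-1}\Omega P+P^{-1}dP.$$
Since $A_{\infty}$ and $R$ are diagonal they commute with $P$, and the definition $A_0^{\varphi}(x)=P^{-1}A_0(x)P$ takes care of the $A_0(x)$ terms, so the $d\theta/\theta$ part is immediate. For the $dx/x$ part, a direct computation gives $P^{-1}dP=\mathrm{diag}(0,1,\dots,1)\,\frac{dx}{x}$, so I only have to check that $R+\mathrm{diag}(0,1,\dots,1)=R^{\varphi}$. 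The first $n+1$ entries are obvious; the remaining entries reduce, entry by entry, to the identity $1-s_k/\mu=s_{\mu+n-k}/\mu$ for $k\geq n+1$, which is exactly the symmetry provided by Lemma \ref{lessk}.

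For (3), the $dx/x$ part of $\Omega^{\varphi}$ specialized at $x=0$ (and evaluated at $\theta\neq 0$) is
$$R^{\varphi}-\frac{\overline{A}_0^{\varphi}}{\mu\theta}.$$
The key observation is that setting $x=0$ in $A_0^{\varphi}(x)$ kills its top-right entry, so $\overline{A}_0^{\varphi}$ is \emph{strictly} lower triangular. Since $R^{\varphi}$ is diagonal, the residue matrix is lower triangular and its eigenvalues are exactly the diagonal entries of $R^{\varphi}$, namely $0,\dots,0,s_{n+1}/\mu,\dots,s_{\mu-1}/\mu$. The preliminary observation (Section \ref{absolu}) that $0\leq s_k/\mu<1$ for all $k$ then gives the required inclusion in $[0,1[$.

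The only non-formal step is the bookkeeping in (2), and even there the work is entirely contained in Lemma \ref{lessk}; the rest is a matter of unwinding definitions and reading eigenvalues off a triangular matrix.
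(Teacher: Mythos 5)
Your proposal is correct and follows essentially the same route as the paper: the paper's own proof derives (1) and (2) from Proposition \ref{libre} via the gauge identity $R^{\varphi}\frac{dx}{x}=R\frac{dx}{x}+P^{-1}dP$ together with the symmetry $s_{k}+s_{\mu+n-k}=\mu$ of Lemma \ref{lessk}, and gets (3) from $0\leq s_{k}/\mu<1$. You have merely made explicit the details the paper leaves implicit (invertibility of $P$ over $\cit[x,x^{-1}]$, the entry-by-entry matching of $R+P^{-1}x\partial_{x}P$ with $R^{\varphi}$, and the lower-triangularity of the residue matrix), all of which check out.
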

\begin{proof}
(1) and (2) follow from proposition \ref{libre}, using lemma \ref{lessk} and the fact that
$$R^{\varphi}\frac{dx}{x}=R\frac{dx}{x}+P^{-1}dP.$$
(3) follows because $0\leq \frac{s_{k}}{\mu}<1$. 
\end{proof}

\begin{remark}
Using the variable $\tau :=\theta^{-1}$, we find that the matrix of the connection $\nabla$ takes the form 
$$(-A_{0}^{\varphi}(x)\tau -A_{\infty})\frac{d\tau}{\tau}+(-A_{0}^{\varphi}(x)\tau -A_{\infty}+H)\frac{dx}{\mu x}$$ 
where $H$ is the diagonal matrix $diag (0,1,\cdots ,\mu -1)$. This can be used, because the entries of $H$ are integers, to show that the monodromies $T$ and $T'$ corresponding respectively to the loops around the divisors $\{\tau =0\}\times X$ and $\cit\times \{0\}$ in $\cit\times X$ are related by the formula
$$T^{-1}=(T')^{\mu}$$
(see \cite[Corollary 6.5 (ii)]{dGMS}). 
\end{remark}

\subsubsection{The $\psi$-solution}
\label{solcan2}
Define now\\

$\bullet$ $Q=diag (1,x,\cdots ,x, 1,\cdots ,1)$ ($x$ is counted $n$-times),\\       

$\bullet$ $A_{0}^{\psi}(x)=Q^{-1}A_{0}(x)Q,$\\

$\bullet$ $R^{\psi}=-diag (0,\cdots ,0,\frac{s_{\mu -1}}{\mu},\cdots ,\frac{s_{n+1}}{\mu}).$

\begin{lemma} (1) Suppose that $\mu \geq n+2$.
One has $(A_{0}^{\psi}(x))_{1,\mu}=\mu $,                  
$$(A_{0}^{\psi}(x))_{i+1,i}=\mu $$
if $i\neq n+1$ and 
$$(A_{0}^{\psi}(x))_{n+2,n+1}=\mu x.$$
(2) Suppose that $\mu =n+1$.
One has $(A_{0}^{\psi}(x))_{1,\mu}=\mu x$                 
and
$$(A_{0}^{\psi}(x))_{i+1,i}=\mu $$
if $i=1,\cdots , n$.
\end{lemma}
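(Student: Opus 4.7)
My plan is to exploit the fact that $Q$ is diagonal, so the conjugation $Q^{-1}A_{0}(x)Q$ leaves the support of the matrix unchanged and merely rescales each entry $(A_{0})_{ij}$ by the ratio $q_{j}/q_{i}$, where $q_{k}$ denotes the $k$-th diagonal entry of $Q$. Since $A_{0}(x)$ is the companion-type matrix recalled above, its only nonzero entries are $(A_{0})_{1,\mu}=\mu$, $(A_{0})_{2,1}=\mu x$, and $(A_{0})_{i+1,i}=\mu$ for $i=2,\ldots,\mu-1$. So the proof reduces to tracking the value of $q_{j}/q_{i}$ at exactly these positions, and the lemma is essentially a bookkeeping exercise.

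For part (1), $\mu\geq n+2$ and the nontrivial ''jump'' in $Q$ occurs between index $n+1$ (where $q_{n+1}=x$) and indices $1,n+2,\ldots,\mu$ (where $q_{k}=1$). I will check three ranges separately. At the position $(1,\mu)$ both $q_{1}=q_{\mu}=1$, so the entry stays $\mu$. At $(2,1)$ the ratio $q_{1}/q_{2}=1/x$ converts $\mu x$ into $\mu$. For subdiagonal entries $(i+1,i)$ with $2\leq i\leq n$ (resp.\ with $n+2\leq i\leq \mu-1$) both $q_{i}$ and $q_{i+1}$ are equal to $x$ (resp.\ to $1$), so the entry remains $\mu$. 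The only remaining subdiagonal slot is $(n+2,n+1)$, where $q_{n+1}/q_{n+2}=x/1=x$, producing $\mu x$ as claimed.

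For part (2), $\mu=n+1$, so now $q_{1}=1$ and $q_{2}=\cdots=q_{\mu}=x$. The same analysis applies: $(1,\mu)$ is rescaled by $q_{\mu}/q_{1}=x$, turning $\mu$ into $\mu x$; $(2,1)$ is rescaled by $q_{1}/q_{2}=1/x$, turning $\mu x$ into $\mu$; and every subdiagonal position $(i+1,i)$ with $i=2,\ldots,n$ has $q_{i}=q_{i+1}=x$, hence is unchanged. This matches the announced form, and since part (2) is essentially the degenerate case $n+2>\mu$ of part (1), nothing new appears.

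There is no real obstacle: the whole argument is a direct diagonal-conjugation calculation, and the only care needed is to split the index range at the transition point $n+1\leftrightarrow n+2$ so that the lone surviving $x$ on the subdiagonal is correctly placed.
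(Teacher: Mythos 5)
Your proof is correct and is exactly the direct diagonal-conjugation computation $(Q^{-1}A_{0}Q)_{ij}=(q_{j}/q_{i})(A_{0})_{ij}$ that the paper leaves implicit (its proof is just ``Clear''). Nothing to add.
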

\begin{proof} Clear.
\end{proof}

\noindent In the sequel, $\overline{A}_{0}^{\psi}$ will denote the value of $A_{0}^{\psi}(x)$ at $x=0$. Note that $\overline{A}_{0}^{\psi}$ is regular.

\begin{proposition} (1) $\omega^{\psi}=\omega Q$ is a basis of $G_{0}$,\\
(2) In this basis, the matrix of $\nabla$ takes the form
$$(\frac{A_{0}^{\psi}(x)}{\theta}+A_{\infty})\frac{d\theta}{\theta}+(R^{\psi}-\frac{A_{0}^{\psi}(x)}{\mu\theta})\frac{dx}{x}.$$
(3) If $\theta\neq 0$, the residue matrix of $\nabla_{\partial_{x}}$ at $x=0$ takes the form
$$R^{\psi}-\frac{\overline{A}_{0}^{\psi}}{\mu\theta}.$$
Its eigenvalues are contained in $]-1,0]$.
\end{proposition}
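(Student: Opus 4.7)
The plan is to imitate the proof of proposition \ref{basevarphi}, treating $Q$ as a gauge transformation applied to the basis $\omega$ produced by proposition \ref{libre}. For (1), the determinant of $Q = diag(1, x, \cdots, x, 1, \cdots, 1)$ equals $x^{n}$, which is a unit on $X = \cit^{*}$, so $\omega^{\psi} = \omega Q$ is immediately a basis of $G_{0}$ over $\cit[x, x^{-1}, \theta]$; this step is essentially automatic.

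For (2), I would apply the standard base-change formula: if $M$ is the connection matrix in the basis $\omega$ given by proposition \ref{libre}, then in the basis $\omega Q$ the new matrix is $Q^{-1} M Q + Q^{-1} dQ$. Since $Q$ is independent of $\theta$ and diagonal (hence commutes with the diagonal matrices $A_{\infty}$ and $R$), the conjugation simply replaces $A_{0}(x)$ by $A_{0}^{\psi}(x) = Q^{-1} A_{0}(x) Q$ in both the $d\theta/\theta$ and $dx/x$ coefficients, leaving $A_{\infty}$ and $R$ untouched. The only extra contribution is $Q^{-1} dQ = diag(0, 1, \cdots, 1, 0, \cdots, 0)\, dx/x$ (the $1$'s occupying positions $2$ through $n+1$), and a one-line check shows that adding this to $R$ cancels exactly the $-1$ entries and turns the diagonal into that of $R^{\psi}$.

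For (3), evaluating the $dx/x$ coefficient at $x = 0$ (which is legitimate for $\theta \neq 0$) yields the residue matrix $R^{\psi} - \overline{A}_{0}^{\psi}/(\mu\theta)$, and the substantive point — the step I expect to require the most care — is reading off its eigenvalues. The preceding lemma tells us that $\overline{A}_{0}^{\psi}$ is block upper triangular with respect to the splitting of indices into $\{1,\cdots,n+1\}$ and $\{n+2,\cdots,\mu\}$: the only subdiagonal entry linking the two blocks, $(\overline{A}_{0}^{\psi})_{n+2,n+1}$, vanishes at $x = 0$, while the corner entry $(1,\mu) = \mu$ sits in the upper-right off-diagonal block. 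Both diagonal blocks are then strictly lower triangular with $\mu$'s on the subdiagonal, hence nilpotent. Combined with the fact that $R^{\psi}$ is diagonal and vanishes on its first $n+1$ positions, the residue matrix is itself block upper triangular with each diagonal block triangular, so its eigenvalues can be read off from the diagonal of $R^{\psi}$: the value $0$ appears $n+1$ times from the top block, and the values $-s_{\mu-1}/\mu, \cdots, -s_{n+1}/\mu$ appear in the bottom block. Lemma \ref{lessk}, together with $s_{n+1} = \mu/\max_{i} w_{i} > 0$ and the symmetry $s_{k} + s_{\mu+n-k} = \mu$, forces $0 < s_{k} < \mu$ for $k = n+1, \cdots, \mu-1$, so these eigenvalues lie in $]-1, 0[$ and the full spectrum lies in $]-1, 0]$. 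The degenerate case $\mu = n+1$ must be handled separately: there $R^{\psi} = 0$ and $\overline{A}_{0}^{\psi}$ is globally nilpotent, so the only eigenvalue is $0$.
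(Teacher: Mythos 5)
Your proposal is correct and follows exactly the route the paper intends: the paper states this proposition without proof, by analogy with Proposition \ref{basevarphi}, whose proof is precisely the gauge-transformation argument $Q^{-1}MQ+Q^{-1}dQ$ with the identity $R^{\psi}\frac{dx}{x}=R\frac{dx}{x}+Q^{-1}dQ$ and the bound $0\leq s_{k}/\mu<1$. Your part (3) in fact supplies a detail the paper leaves implicit — that $\overline{A}_{0}^{\psi}$ is block-triangular with nilpotent diagonal blocks at $x=0$, so the eigenvalues really are read off the diagonal of $R^{\psi}$ — and correctly handles the case $\mu=n+1$.
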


\begin{example} If $n=2$ and $w_{1}=w_{2}=2$, one has
$$A_{0}^{\psi}(x)=5\left ( \begin{array}{ccccc}
0  & 0  & 0 & 0  & 1\\
1  & 0  & 0 & 0  & 0\\
0  & 1  & 0 & 0  & 0\\
0  & 0  & x & 0  & 0\\
0  & 0  & 0 & 1  & 0 
\end{array}
\right )$$
and
$$\overline{A}_{0}^{\psi}=5\left ( \begin{array}{ccccc}
0  & 0  & 0 & 0  & 1\\
1  & 0  & 0 & 0  & 0\\
0  & 1  & 0 & 0  & 0\\
0  & 0  & 0 & 0  & 0\\
0  & 0  & 0 & 1  & 0 
\end{array}
\right ).$$
\end{example}

\begin{remark}
\label{terminologie}
Let ${\cal L}$ (resp. ${\cal L}^{\varphi}$, ${\cal L}^{\psi}$) be the  $\cit [x, \theta ,\theta^{-1}]$-submodule of $G$ generated by $\omega$ (resp. $\omega^{\varphi}$, $\omega^{\psi}$).
One has ${\cal L}^{\varphi}\subset {\cal L}^{\psi}\subset {\cal L}$. Notice that
$$\omega_{0}=\omega_{0}^{\varphi}=\omega_{0}^{\psi}$$
and
$$R(\omega_{0})=R^{\varphi}(\omega_{0}^{\varphi})=R^{\psi}(\omega_{0}^{\psi})=0.$$
If $\mu =n+1$, one has ${\cal L}^{\varphi}= {\cal L}^{\psi}$.
\end{remark}

\begin{remark} (The flat basis) 
Let 
$\Delta$ be the open disc in $\cit$ of radius $1$, centered at $x=1$. The local basis $\omega^{flat}:=\omega x^{-R}$ ($x\in\Delta$)
of $G_{0}^{an}$ is {\em flat}: the matrix of the connection $\nabla$ takes the form 
$$(\frac{A_{0}^{flat}(x)}{\theta}+A_{\infty})\frac{d\theta}{\theta}-\frac{A_{0}^{flat}(x)}{\mu\theta}\frac{dx}{x}$$
where $A_{0}^{flat}=x^{R}A_{0}(x)x^{-R}$. Notice that the matrix $A_{0}^{flat}(x)$ has a limit $\overline{A}_{0}^{flat}$ when  $x$ approaches $0$: this is due to the fact that the sequence  $(s_{k})$ is increasing (this is equivalent to the fact that $\alpha_{k+1}\leq \alpha_{k}+1$).

\end{remark}

\section{Duality}
\label{dualite}

We define in this section a non-degenerate, symmetric and flat bilinear form on $G_{0}$.

\subsection{}
The lattice $G_{0}^{o}$ is equipped with a non-degenerate bilinear form          
$$S^{o}:G_{0}^{o}\times G_{0}^{o}\rightarrow\cit[\theta ]\theta^{n},$$
$\nabla^{o}$-flat and satisfying , for $p(\theta )\in\cit[\theta ]$,
$$p(\theta)S^{o}(\bullet \, ,\, \bullet )=S^{o}(p(\theta)\bullet \, ,\, \bullet )=S^{o}(\bullet \, ,p(-\theta)\, \bullet ).$$ 
The basis $\omega^{o}=(\omega^{o}_{0},\cdots ,\omega^{o}_{\mu -1})$ given by lemma \ref{start} is {\em adapted} to $S^{o}$: one has
$$S^{o}(\omega^{o}_{k},\omega^{o}_{j})=\delta_{k,n-j}\theta^{n}$$
for $k=0,\cdots ,n$ and
$$S^{o}(\omega^{o}_{k},\omega^{o}_{j})=\delta_{k,\mu +n-j}\theta^{n}$$
for $k=n+1,\cdots ,\mu -1$. In particular,
$A_{\infty}+A_{\infty}^{*}=nI$  and $(A_{0}^{o})^{*}=A_{0}^{o}$, where $^{*}$ denotes the adjoint with respect to $S^{o}$. 
All these results can be found in \cite[Sect. 4]{DoSa2}.\\

 We define, in the basis $\omega =(\omega_{0},\cdots ,\omega_{\mu -1})$ given by proposition \ref{libre},
$$S(\omega_{0},\omega_{n})=x^{-1}\theta^{n},$$
$$S(\omega_{k},\omega_{n-k})=x^{-2}\theta^{n}$$
for  $k=1,\cdots ,n-1$, 
$$S(\omega_{k},\omega_{\mu +n-k})=x^{-1}\theta^{n}$$
for  $k=n+1,\cdots ,\mu-1$ and
$$S(\omega_{i},\omega_{j})=0$$
otherwise. Notice that $S$ is constant in the flat basis $\omega^{flat}$: one has $S(\omega_{i}^{flat},\omega_{j}^{flat})=S^{o}(\omega_{i}^{o},\omega_{j}^{o})$
for all $i$ and for all $j$ (this follows from the symmetry property of the $s_{k}$'s).
Define now                                                                       
$$S:G_{0}\times G_{0}\rightarrow\cit[x,x^{-1},\theta ]\theta^{n}$$
by linearity, using the rules        
$$p(\theta)(\bullet \, ,\, \bullet )=S(p(\theta)\bullet \, ,\, \bullet )=S(\bullet \, ,p(-\theta)\, \bullet )$$ 
and
$$a(x)S(\bullet \, ,\, \bullet )=S(a(x)\bullet \, ,\, \bullet )=S(\bullet \, , a(x)\, \bullet )$$ 
for $p(\theta)\in\cit [\theta]$ and $a(x)\in\cit [x,x^{-1}]$. 

If $A(\theta ,x)$ denotes the matrix of the covariant derivative $\nabla_{\partial_{\theta}}$, $\check{\nabla}_{\partial_{\theta}}$ will denote the covariant derivative whose matrix is $-A(-\theta ,x)$ in the same basis.                        
We will say that $S$ is $\nabla$-{\em flat} if        
$$\partial_{\theta}S(\varepsilon , \eta )=S(\nabla_{\partial_{\theta}}(\varepsilon ) , \eta )+S(\varepsilon , \check{\nabla}_{\partial_{\theta}}(\eta) )$$
and
$$\partial_{x}S(\varepsilon , \eta )=S(\nabla_{\partial_{x}}(\varepsilon ) , \eta )+S(\varepsilon , \nabla_{\partial_{x}}(\eta) ).$$
Keep the notations of section \ref{deformation} and put $C_{0}=-A_{0}(x)/\mu x$, $C_{\infty}=R/x$, $C_{0}^{\varphi}=-A_{0}^{\varphi}(x)/\mu x$ {\em etc...}

\begin{lemma}\label{symetrie} 
(1) One has
$$(A_{0}(x))^{*}=A_{0}(x),\ A_{\infty}+A_{\infty}^{*}=nI,\ C_{0}^{*}=C_{0},$$ 
where $^{*}$ denotes the adjoint with respect to $S$. Same results for $A_{0}^{flat}(x)$, $C^{flat}$, $A_{0}^{\varphi}(x)$, $C_{0}^{\varphi}$, $A_{0}^{\psi}(x)$, $C_{0}^{\psi}$.\\ 
(2) The bilinear form $S$ is $\nabla$-flat.         
\end{lemma}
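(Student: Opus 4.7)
For part (1), I would work in the basis $\omega = (\omega_0, \dots, \omega_{\mu-1})$ of Proposition \ref{libre}, where $S$ has only the anti-diagonal entries $S_{k,n-k}$ (for $0 \leq k \leq n$) and $S_{k,\mu+n-k}$ (for $n+1 \leq k \leq \mu-1$), carrying an $x$-weight of $x^{-1}$ or $x^{-2}$. Since $A_\infty$ is diagonal, the identity $A_\infty + A_\infty^* = nI$ reduces to $\alpha_k + \alpha_{n-k} = n$ for $0 \leq k \leq n$ and $\alpha_k + \alpha_{\mu+n-k} = n$ for $k \geq n+1$, both given by Corollary \ref{lesalphak}. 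For $(A_0(x))^* = A_0(x)$, the operator $A_0(x)$ is a cyclic shift with the single factor $x$ attached to the transition $\omega_0 \mapsto \omega_1$; comparing $S(A_0 \omega_i, \omega_j)$ and $S(\omega_i, A_0 \omega_j)$ case by case, the extra factor $x$ is precisely absorbed by the transition between the $x^{-1}$ entries (at $k=0$ or $k=n$) and the $x^{-2}$ entries (for $1 \leq k \leq n-1$) of $S$. The identity $C_0^* = C_0$ is then immediate because $C_0$ differs from $A_0(x)$ by the scalar $-1/(\mu x)$ in $\cit[x, x^{-1}]$, and the rule $a(x) S(\cdot, \cdot) = S(a(x)\cdot, \cdot) = S(\cdot, a(x)\cdot)$ preserves self-adjointness under such scalar multiplication.

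For the $\mathrm{flat}$, $\varphi$ and $\psi$ variants, the key observation is that self-adjointness of an operator with respect to a fixed bilinear form is a coordinate-free property: the matrices $A_0^{\mathrm{flat}}(x)$, $A_0^{\varphi}(x)$, $A_0^{\psi}(x)$ are simply the matrices of the same underlying operator in the new bases $\omega^{\mathrm{flat}} = \omega x^{-R}$, $\omega^\varphi = \omega P$, $\omega^\psi = \omega Q$, and the property $A^* = A$ transports through the (diagonal, $x$-dependent) change-of-basis. The same argument gives the corresponding statements for $C^{\mathrm{flat}}$, $C_0^\varphi$, $C_0^\psi$.

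For part (2), I would verify $\nabla$-flatness of $S$ directly in the basis $\omega$. Writing the matrix of $\nabla_{\partial_\theta}$ as $A_0(x)/\theta^2 + A_\infty/\theta$ and expanding both sides of the flatness identity using the $S$-pullout rules, the $\theta^{-2}$ contributions cancel by $A_0^* = A_0$ (equivalently $A_0^T S = S A_0$) and the $\theta^{-1}$ contributions combine through $A_\infty + A_\infty^* = nI$ to match the natural relation $\partial_\theta S = nS/\theta$, which holds because $S$ takes values in $\cit[x,x^{-1}]\theta^n$. For the $\partial_x$ direction, the matrix of $\nabla_{\partial_x}$ is $R/x - A_0(x)/(\mu x \theta)$; the $A_0$-terms (with a sign flip from pulling $1/\theta$ out of the second slot) cancel again by $A_0^* = A_0$, and what remains is the diagonal identity $R_{kk} + R_{jj} = -d_{kj}$, where $d_{kj} \in \{1,2\}$ is defined by $S_{kj} = x^{-d_{kj}} \theta^n$. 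One checks: $k=0, j=n$ gives $0 + (-1) = -1$; $1 \leq k \leq n-1, j = n-k$ gives $-1 + (-1) = -2$; and $k \geq n+1, j = \mu + n - k$ gives $-(s_k + s_{\mu+n-k})/\mu = -1$ by Lemma \ref{lessk}.

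The main obstacle is the careful bookkeeping of the $S$-pullout rules: scalars from $\cit[\theta]$ emerge from the second slot with $\theta \mapsto -\theta$, while scalars from $\cit[x, x^{-1}]$ come out unchanged. This matters chiefly in the $\partial_x$-flatness check, where $\nabla_{\partial_x}$ mixes the pure $R/x$ and the $\theta$-dependent $A_0(x)/(\mu x \theta)$. Once this is handled, the whole statement reduces to $A_0^* = A_0$, $A_\infty + A_\infty^* = nI$, and the symmetry identities $\alpha_k + \alpha_{n-k} = n$ and $s_k + s_{\mu+n-k} = \mu$ from Corollary \ref{lesalphak} and Lemma \ref{lessk}.
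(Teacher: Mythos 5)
Your proposal is correct and follows essentially the same route as the paper: the paper's proof is a two-line pointer to exactly the computations you carry out, namely the direct verification of ${}^{t}A_{0}\Sigma=\Sigma A_{0}$ and $\alpha_{k}+\alpha_{j}=n$ on the anti-diagonal support of $S$ (via Corollary \ref{lesalphak}), with flatness reducing to these together with the identity $R_{kk}+R_{jj}=-d_{kj}$ coming from Lemma \ref{lessk}. Your additional observations — that the $\varphi$, $\psi$ and flat cases follow by base-change invariance of self-adjointness, and the bookkeeping of the $\theta\mapsto-\theta$ rule — are consistent with the paper's conventions and fill in details the paper leaves implicit.
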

\begin{proof} 
(1) The first equality follows from the definition of $A_{0}(x)$ and from the definition of $S$. For the second, use moreover the symmetry property of the numbers $\alpha_{k}$ (see lemma \ref{lesalphak}). The third equality is then clear and (2) follows from (1) and from the definition of $S$ and $R$.
\end{proof}

\noindent We have in particular
$$x\partial_{x}S(\varepsilon , \eta )=S(R(\varepsilon ) , \eta )+S(\varepsilon , R(\eta) )$$
and this give a symmetry property for the matrix $R$.
Notice also that $S$ is the only $\nabla$-flat bilinear form which restricts to $S^{o}$ for $x=1$.
Last, notice that the coefficient of $\theta^{n}$ in $S(\varepsilon ,\eta )$, $\varepsilon ,\eta \in G_{0}$, depends only on the classes of $\varepsilon$ and  $\eta$ in $G_{0}/\theta G_{0}$. We will denote it by $g([\varepsilon ],[\eta ])$. This defines a nondegenerate bilinear form on $G_{0}/\theta G_{0}$ (see \cite[p. 211]{Sab1}).

\section{Application to the construction of Frobenius type structures and Frobenius manifolds} 
\label{construction}

\subsection{Frobenius type structures}
\label{FrTySt}

In our situation, a Frobenius type structure on $X$ is a tuple (see also \cite{Do1} and \cite{HeMa})
$$(X, E, \bigtriangledown , R_{0}, R_{\infty},\Phi ,g)$$
\noindent where\\

$\bullet$ $E$ is a  $\cit [x,x^{-1}]$-free module,\\

$\bullet$ $R_{0}$ and $R_{\infty}$ are $\cit [x,x^{-1}]$-linear endomorphisms of $E$,\\

$\bullet$ $\Phi :E\rightarrow \Omega^{1}(X)\otimes E$ is a $\cit [x,x^{-1}]$-linear map,\\

$\bullet$ $g$ is a metric on $E$, {\em i.e} a $\cit [x,x^{-1}]$-bilinear form, symmetric and nondegenerate,\\

$\bullet$ $\bigtriangledown$ is a connection on $E$,\\

\noindent these object satisfying the relations

\begin{center} $\bigtriangledown^{2}=0$, $\bigtriangledown (R_{\infty})=0$, $\Phi\wedge\Phi =0$, $[R_{0},\Phi ]=0$,
\end{center}
\begin{center} $\bigtriangledown (\Phi )=0$, $\bigtriangledown (R_{0})+\Phi =[\Phi ,R_{\infty}]$, 
\end{center} 
\begin{center}
 $\bigtriangledown (g)=0$, $\Phi^{*}=\Phi$, $R_{0}^{*}=R_{0}$, $R_{\infty}+R_{\infty}^{*}=rId$
\end{center}
for a suitable constant $r$, $^{*}$ denoting as above the adjoint with respect to $g$.\\

Keep the notations of section \ref{solnat}. 
The basis $\omega$ gives an extension of $G_{0}$ as a trivial bundle ${\cal G}$ on $\ppit^{1}\times X$ (the module of its global sections is generated by $\omega_{0},\cdots ,\omega_{\mu -1}$) equipped with a connection with logarithmic pole at $\tau :=\theta^{-1}=0$ and pole of Poincar\'e rank less or equal to one at $\theta =0$. Define 
$E:=i_{\{\theta =0\}}{\cal G}$, $E_{\infty }:=i_{\{\tau =0\}}{\cal G}$ ($E$ and $E_{\infty}$ are canonically isomorphic) and, for $i,j=0,\cdots, \mu -1$, $[\; ]$ denoting the class in $E$,\\

$\bullet$ $R_{0}[\omega_{i}]:=[\theta^{2}\nabla_{\partial_{\theta}}\omega_{i}],$\\

$\bullet$ $g([\omega_{i}],[\omega_{j}]):=\theta^{-n}S(\omega_{i},\omega_{j})$,\\

$\bullet$ $\Phi_{\xi}[\omega_{i}]:=[\theta\nabla_{\partial_{\xi}}\omega_{i}]$ for any vector field $\xi$ on $X$.\\

\noindent The connection $\bigtriangledown$ and the endomorphism $R_{\infty}$ are defined analogously, using the restriction $E_{\infty}$, $[\; ]$ denoting now the class in $E_{\infty}$,\\

$\bullet$ $R_{\infty}[\omega_{i}]:=[-\nabla_{\tau\partial_{\tau}}\omega_{i}]$,\\

$\bullet$ $\bigtriangledown_{\xi}[\omega_{i}]:=[\nabla_{\partial_{\xi}}a]$.

\begin{corollary}\label{STF}  The tuple
$(X, E, R_{0}, R_{\infty}, \Phi , \bigtriangledown , g)$ is a Frobenius type structure on $X:=\cit^{*}$. 
\end{corollary}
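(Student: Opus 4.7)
The plan is to read the matrices of $R_0$, $R_\infty$, $\Phi$ and $\bigtriangledown$ directly from Proposition \ref{libre}, to import the metric compatibilities from Lemma \ref{symetrie}, and to deduce the remaining relations from the flatness of $\nabla$ on the extension ${\cal G}$ over $\ppit^1\times X$.

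First, I would note that freeness of $E$ (and, canonically, of $E_\infty$) as a $\cit [x,x^{-1}]$-module follows at once from Proposition \ref{libre}(1), the basis $\omega$ of $G_0$ descending to a basis $[\omega]$ of $E$. In this basis, the Birkhoff form of Proposition \ref{libre}(2) gives the matrices of $R_0$, $\Phi_{x\partial_x}$, $\bigtriangledown_{x\partial_x}$ and $R_\infty$ as $A_0(x)$, $-A_0(x)/\mu$, $R$ and $A_\infty$ respectively. Several axioms then become transparent: $[R_0,\Phi]=0$ because $\Phi_{x\partial_x}$ is a scalar multiple of $R_0$; $\bigtriangledown^2=0$ because $R$ is constant in $x$; and $\Phi\wedge\Phi = 0$, $\bigtriangledown(\Phi) = 0$ hold automatically since $X$ is one-dimensional.

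The two remaining compatibilities on the connection side, $\bigtriangledown(R_\infty)=0$ and $\bigtriangledown(R_0)+\Phi=[\Phi,R_\infty]$, would be obtained by expanding the flatness identity $d\Omega+\Omega\wedge\Omega=0$ for the connection form $\Omega$ of Proposition \ref{libre}(2) and collecting coefficients of $\theta^{-1}$ and $\theta^{-2}$ in its $d\theta\wedge dx$ component. The $\theta^{-1}$-piece yields $[R,A_\infty]=0$, equivalent to $\bigtriangledown(R_\infty)=0$; the $\theta^{-2}$-piece gives a matrix identity between $x\partial_x A_0(x)$, $[R,A_0(x)]$, $A_0(x)$ and $[A_\infty,A_0(x)]$ which translates precisely into $\bigtriangledown(R_0)+\Phi=[\Phi,R_\infty]$.

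Finally, the metric axioms $R_0^*=R_0$, $\Phi^*=\Phi$ and $R_\infty+R_\infty^*=nId$ are Lemma \ref{symetrie}(1) applied to the basis $[\omega]$, noting that the $g$-adjoint of an endomorphism of $E$ equals its $S$-adjoint since $g$ is defined as the coefficient of $\theta^n$ in $S$. Flatness $\bigtriangledown(g)=0$ is the restriction at $\tau=0$ of the $\nabla$-flatness of $S$ from Lemma \ref{symetrie}(2). I expect the only minor obstacle to be careful bookkeeping of signs and pole orders in the flatness expansion; no analytic or algebraic difficulty appears, the whole proof reducing to formal matrix computations.
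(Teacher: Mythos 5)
Your proof is correct and takes essentially the same route as the paper: the paper's proof of Corollary \ref{STF} consists precisely of invoking Proposition \ref{libre} and Lemma \ref{symetrie} and delegating the dictionary between flat connections in Birkhoff normal form and Frobenius type structures to \cite[Chapitre V, 2]{Sab1}, which is exactly the flatness expansion you carry out explicitly. The only point needing care is the one you already flag, namely the sign conventions in extracting $\bigtriangledown(R_{0})+\Phi=[\Phi ,R_{\infty}]$ and $\bigtriangledown(R_{\infty})=0$ from the $\theta^{-2}$ and $\theta^{-1}$ coefficients of $d\Omega+\Omega\wedge\Omega=0$.
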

\begin{proof} This follows from proposition \ref{libre} and lemma \ref{symetrie} (see \cite[Chapitre V, 2]{Sab1}).
\end{proof}

\subsection{Frobenius manifolds ''in family''}
\label{constructionfrobman}

Recall that $\Delta$ denotes the open disc in $\cit$ of radius $1$, centered at $x=1$. Corollary \ref{STF} gives also an analytic Frobenius type structure 
$${\cal F}=(\Delta , E^{an}, R_{0}^{an}, R_{\infty}, \Phi^{an} , \bigtriangledown^{an} , g^{an})$$
on the {\em simply connected domain} $\Delta$. Let $\omega_{0}^{an}$ be the class of $\omega_{0}$ in $E^{an}$:  $\omega_{0}^{an}$ is $\bigtriangledown^{an}$-flat because $R(\omega_{0})=0$ (see remark \ref{terminologie}). The universal deformations and the period maps that we will consider are the ones defined in \cite{Do1} and \cite{HeMa}.

\begin{lemma} 
(1) The Frobenius type structure ${\cal F}$ has a universal deformation 
$$\tilde{{\cal F}}=(N, \tilde{E}^{an}, \tilde{R}_{0}^{an}, \tilde{R}_{\infty}, \tilde{\Phi}^{an} , \tilde{\bigtriangledown}^{an} , \tilde{g}^{an})$$
parametrized by $N:=\Delta\times (\cit^{\mu -1},0)$.\\
(2) The period map defined by the $\tilde{\bigtriangledown}^{an}$-flat extension of  $\omega_{0}^{an}$ to $\tilde{{\cal F}}$ is an isomorphism which makes $N$ a Frobenius manifold.
\end{lemma}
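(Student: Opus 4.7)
The plan is to invoke the universal deformation theorem for Frobenius type structures of Hertling--Manin: once we verify that $\omega_{0}^{an}$ is pre-primitive (a cyclic vector for $R_{0}^{an}$), homogeneous (an eigenvector of $R_{\infty}$), and $\bigtriangledown^{an}$-flat, parts (1) and (2) follow directly from the existence of a universal unfolding and from the standard period-map construction, as recorded in \cite{HeMa} (see also \cite[Chapitre VII]{Sab1}).

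The three properties can all be read off from Proposition \ref{libre}. For cyclicity, observe that the matrix $A_{0}(x)$ has companion shape with nonzero entries $\mu x$ in position $(2,1)$, $\mu$ in positions $(i+1,i)$ for $2\leq i\leq \mu-1$, and $\mu$ in position $(1,\mu)$. Since $R_{0}[\omega_{i}]=[\theta^{2}\nabla_{\partial_{\theta}}\omega_{i}]$ is given by the matrix $A_{0}(x)$ in the basis $\omega$, we obtain $R_{0}[\omega_{0}]=\mu x\,[\omega_{1}]$ and $R_{0}[\omega_{i}]=\mu\,[\omega_{i+1}]$ for $1\leq i\leq \mu-2$. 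Because $0\notin\Delta$, the factor $\mu x$ is invertible, so iterating produces the whole basis of $E^{an}_{x}$ at every $x\in\Delta$. Homogeneity follows because $R_{\infty}$ is diagonal in the basis $\omega$ with eigenvalues $\alpha_{0},\ldots,\alpha_{\mu-1}$ and $\alpha_{0}=0$. Flatness follows because the $dx/x$-part of the connection matrix, restricted to $E_{\infty}$, is $R/x$ (the $-A_{0}(x)/(\mu\theta)$ term dies at $\tau=0$), and $R_{00}=0$; this is the observation recorded in remark \ref{terminologie}.

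For (1), I then apply the Hertling--Manin existence theorem: there is a Frobenius type structure $\tilde{{\cal F}}$ deforming ${\cal F}$ and extending $\omega_{0}^{an}$ to a $\tilde{\bigtriangledown}^{an}$-flat section, universal among such deformations, whose base $N$ has dimension equal to $\mathrm{rank}\,E^{an}=\mu$. Since the base of ${\cal F}$ is the one-dimensional simply connected disc $\Delta$ and the universal unfolding adds $\mu-1$ transverse directions, we get $N=\Delta\times(\cit^{\mu-1},0)$. For (2), the period map $\xi\mapsto \tilde{\Phi}_{\xi}(\tilde{\omega}_{0})\bmod\theta$ from $TN$ to $\tilde{E}^{an}$ is an isomorphism at the central fiber precisely because, by the universal property, the $\tilde{\Phi}$-orbit of $\tilde{\omega}_{0}$ there coincides with the $R_{0}$-orbit of $\omega_{0}^{an}$, which spans by Step~2. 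Being an isomorphism is an open condition, so it persists on a neighbourhood of the central fiber; transporting the multiplication, Euler field and metric from $\tilde{{\cal F}}$ via this isomorphism then endows $N$ with a Frobenius manifold structure, the axioms being automatic from those of $\tilde{{\cal F}}$.

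The main obstacle is maintaining the cyclicity of $\omega_{0}^{an}$ uniformly over the whole base $\Delta$: this is precisely why $\Delta$ is chosen to avoid $x=0$, since there the subdiagonal entry $\mu x$ of $A_{0}(x)$ vanishes and $\omega_{0}$ ceases to be cyclic for $R_{0}$. Understanding what happens in this degenerate limit is the motivating problem of the subsequent sections.
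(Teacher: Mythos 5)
Your proposal is correct and follows essentially the same route as the paper: both reduce the statement to the Hertling--Manin universal deformation theorem (the paper cites its adaptation in \cite[Section 6]{Do1}) after checking that $\omega_{0}^{an}$ generates $E^{an}$ under iteration of $R_{0}^{an}$ and that the Kodaira--Spencer/injectivity condition holds (the paper phrases the latter as $u_{0}\neq 0$ in $E^{an}$, which is your observation that $R_{0}[\omega_{0}]=\mu x[\omega_{1}]\neq 0$ for $x\in\Delta$). Your verification via the explicit companion shape of $A_{0}(x)$ and the remark that $0\notin\Delta$ is exactly the content the paper leaves implicit.
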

\begin{proof} (1) We can use the adaptation of \cite[Theorem 2.5]{HeMa} given in \cite[Section 6]{Do1} because $\omega_{0}^{an}, R_{0}^{an}(\omega_{0}^{an}),\cdots , (R_{0}^{an})^{\mu -1}(\omega_{0}^{an})$ generate $E^{an}$ and because $u_{0}:=1/u_{1}^{w_{1}}\cdots u_{n}^{w_{n}}$ is not equal to zero in $E^{an}$. (2) follows from (1) (see e.g. \cite[Theorem 4.5]{HeMa}). 
\end{proof}

\noindent The previous construction can be also done in the same way ''point by point''
(see \cite{DoSa2} and \cite{HeMa} and the references therein)
and, as quoted in the introduction, this is the classical point of view. 
Indeed, let $x\in \Delta$ and put $F_{x}:=F(.,x)$. One can attach to the Laurent polynomial  $F_{x}$ a Frobenius type structure on a point 
(see section \ref{limSTF} below) ${\cal F}_{x}^{pt}$, a universal deformation $\tilde{{\cal F}}_{x}^{pt}$ of it and finally, with the help of the section $\omega_{0}$, a Frobenius structure on $M:=(\cit^{\mu},0)$. We will call it ''the Frobenius structure attached to $F_{x}$''.
Let ${\cal F}_{x}$ ({\em resp.} $\tilde{{\cal F}}_{x}$) be the germ of ${\cal F}$ ({\em resp.} $\tilde{{\cal F}}$) at $x\in \Delta$ ({\em resp.} $(x,0)$).

\begin{proposition}
(1) The deformations  $\tilde{{\cal F}}_{x}$ and $\tilde{{\cal F}}_{x}^{pt}$ are isomorphic.\\                                   
(2) The period map defined by the flat extension of $\omega_{0}^{an}$ to $\tilde{{\cal F}}_{x}$ is an isomorphism. This yields a
Frobenius structure on $M$ which is isomorphic to the one attached to $F_{x}$.
\end{proposition}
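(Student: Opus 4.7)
The plan is to reduce both assertions to the universal property enjoyed by $\tilde{{\cal F}}_x^{pt}$.

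First, I would verify that the fiber at $x$ of the family ${\cal F}$ is precisely the Frobenius type structure on a point ${\cal F}_x^{pt}$ attached to $F_x=F(\,\cdot\,,x)$. By construction the specialization of $G_{0}$ at $x$ is the Brieskorn lattice of $F_x$, and the global basis $\omega$ of proposition \ref{libre} specializes to a basis in which the matrix of $\nabla_{\partial_{\theta}}$ takes the form $(A_{0}(x)/\theta+A_{\infty})d\theta/\theta$. Re-running the inductive construction of section \ref{solnat} with $F_x$ in place of $F$ (that is, applying the analogue of lemma \ref{lemmebase} to $F_x$ alone) reproduces the canonical solution of the Birkhoff problem for the Brieskorn lattice of $F_x$ given by M. Saito's method, as in \cite[Section 3]{DoSa2}. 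Hence the restriction of $\tilde{{\cal F}}_x$ at $(x,0)$ coincides with ${\cal F}_x^{pt}$, together with the section $\omega_0^{an}$.

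Both $\tilde{{\cal F}}_x$ and $\tilde{{\cal F}}_x^{pt}$ are therefore germs of deformations of the same Frobenius type structure on a point ${\cal F}_x^{pt}$ over $\mu$-dimensional bases, and both carry the same flat section $\omega_0^{an}$ at the origin. Since $\tilde{{\cal F}}_x^{pt}$ is universal by \cite[Theorem 2.5]{HeMa} (adapted in \cite[Section 6]{Do1}), the universal property produces a unique morphism of germs $\Psi:(\Delta,x)\times(\cit^{\mu-1},0)\rightarrow(\cit^{\mu},0)$ together with an isomorphism $\Psi^{\ast}\tilde{{\cal F}}_x^{pt}\simeq\tilde{{\cal F}}_x$ that preserves all the data, including $\omega_0^{an}$. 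The previous lemma shows that $\tilde{{\cal F}}_x$ itself satisfies the hypotheses of the universal deformation theorem (in particular $\omega_0^{an},R_0^{an}(\omega_0^{an}),\ldots,(R_0^{an})^{\mu-1}(\omega_0^{an})$ generate $E^{an}$), so $\Psi$ is necessarily a biholomorphism of $\mu$-dimensional germs. This proves (1).

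For (2), the classical (point-by-point) construction defines the Frobenius structure on $M$ attached to $F_x$ as the image, through the period map based on $\omega_0^{an}$, of $\tilde{{\cal F}}_x^{pt}$. Under the isomorphism $\Psi^{\ast}\tilde{{\cal F}}_x^{pt}\simeq\tilde{{\cal F}}_x$ the flat section $\omega_0^{an}$ is preserved and the period map construction is functorial, so the two period maps yield the same Frobenius structure on $M$ (up to the coordinate change $\Psi$ already absorbed in the identification of their bases). That the period map is itself an isomorphism is then granted, as in the preceding lemma, by the pre-primitivity and homogeneity of $\omega_0^{an}$ via \cite[Theorem 4.5]{HeMa}. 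The main obstacle I anticipate is the first step: one must carefully match the inductive basis construction of section \ref{solnat} (which uses $x$ as part of the data) with the corresponding construction of \cite[Section 3]{DoSa2} applied to $F_x$ alone, up to the natural rescaling of the basis vectors $\omega_k$. Once this identification is secured, the rest follows formally from the Hertling--Manin universal deformation theorem.
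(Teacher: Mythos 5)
Your proof is correct in substance and ends at the same place as the paper's --- the uniqueness of universal deformations --- but it pivots on a different common object. The paper takes the one-parameter germ ${\cal F}_{x}$ as the structure for which both $\tilde{{\cal F}}_{x}$ and $\tilde{{\cal F}}_{x}^{pt}$ are universal: its key step is that $\tilde{{\cal F}}_{x}^{pt}$ is a deformation (indeed a universal one) of ${\cal F}_{x}$, deduced from the fact that $u_{0}$ does not belong to the Jacobian ideal of $f$, so that the $x$-direction is already realized inside the punctual universal deformation; (1) then follows at once since $\tilde{{\cal F}}_{x}$ is universal for ${\cal F}_{x}$ by construction. You instead take the point structure ${\cal F}_{x}^{pt}$ as the common object and argue that both germs are universal deformations of it. That route is viable, but the step ``$\Psi$ is necessarily a biholomorphism'' needs a sharper justification than ``$\tilde{{\cal F}}_{x}$ satisfies the hypotheses of the universal deformation theorem'': the generation condition (GC) guarantees existence and uniqueness of a universal deformation of ${\cal F}_{x}^{pt}$, but does not by itself make a given $\mu$-parameter deformation universal. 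What you need is that $d\Psi$ at the base point is the composition of the Kodaira--Spencer (period) map of $\tilde{{\cal F}}_{x}$ --- an isomorphism by part (2) of the preceding lemma, which is exactly where $u_{0}\neq 0$ in $E^{an}$ enters in disguise --- with the inverse of that of $\tilde{{\cal F}}_{x}^{pt}$. Your preliminary identification of the fiber of ${\cal F}$ at $x$ with ${\cal F}_{x}^{pt}$ is left implicit in the paper but is needed on either route, and your caveat about matching the two basis constructions is well taken; once (1) is secured, part (2) is essentially formal in both arguments.
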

\begin{proof}
Note first that $\tilde{{\cal F}}_{x}^{pt}$ is a deformation of ${\cal F}_{x}$: this follows from the fact that 
$u_{0}$ does not belong to the Jacobian ideal of $f$: see \cite[section 7]{Do1}. Better, $\tilde{{\cal F}}_{x}^{pt}$ is a universal deformation of 
 ${\cal F}_{x}$ because ${\cal F}_{x}$ is a deformation of ${\cal F}_{x}^{pt}$. This gives (1) because, by definition, two universal deformations of a same Frobenius type structure are isomorphic. (2) is then clear.  
\end{proof}

\noindent As a consequence, the universal deformations $\tilde{{\cal F}}_{x}^{pt}$, $x\in\Delta$, are the germs of a same section, namely $\tilde{{\cal F}}$. Thus, the Frobenius structure attached to $F_{x_{1}}$, $x_{1}\in\Delta$, can be seen as an analytic continuation of the one attached to $F_{x_{0}}$, $x_{0}\in\Delta$.

\section{Limits}

\label{limite}

Our goal is now to define a canonical limit, as $x$ approaches $0$, of the Frobenius type structure constructed in corollary \ref{STF}. We will use Deligne's canonical extensions.

\subsection{R\'esum\'e: the canonical extensions at $x=0$}

\label{extcan}
Recall the lattices defined ${\cal L}^{\varphi}$ and ${\cal L}^{\psi}$ defined in remark \ref{terminologie}. Put
$$\overline{{\cal L}}^{\varphi}:={\cal L}^{\varphi}/x{\cal L}^{\varphi}.$$
This is a free  $\cit [\theta ,\theta^{-1} ]$-module of rank $\mu$, equipped with a connection $\overline{\nabla}_{\partial_{\theta}}$ (induced by $\nabla_{\partial_{\theta}}$). 
In the sequel, $\overline{\omega}^{\varphi}$ will denote the basis of  $\overline{{\cal L}}^{\varphi}$ 
induced by $\omega^{\varphi}$. Recall also that $\overline{A}_{0}^{\varphi}$ denotes the value of $A_{0}^{\varphi}(x)$ 
at $x=0$: it is a $\mu\times\mu$ Jordan matrix. The following theorem summarizes the results obtained in the previous sections:

\begin{theorem}
\label{extdel}
 1) ${\cal L}^{\varphi}$ is equipped with a connection
$$\nabla :{\cal L}^{\varphi}\rightarrow \theta^{-1}\Omega_{\cit^{*}\times\cit}(\log (\cit^{*}\times \{0\}))\otimes {\cal L}^{\varphi}.$$
The matrix of $x\nabla_{\partial_{x}}$ in the basis $\omega^{\varphi}$ takes the form
$$-\frac{A_{0}^{\varphi}(x)}{\mu\theta}+diag(0,\cdots ,0,\frac{s_{n+1}}{\mu},\cdots ,\frac{s_{\mu -1}}{\mu})$$
and the one of $\nabla_{\partial_{\theta}}$ takes the form
$$\frac{A_{0}^{\varphi}(x)}{\theta^{2}}+\frac{A_{\infty}}{\theta}.$$
2)  $x\nabla_{\partial_{x}}$ induces a map on $\overline{{\cal L}}^{\varphi}$
whose matrix, in the basis $\overline{\omega}^{\varphi}$, takes the form
$$-\frac{\overline{A}_{0}^{\varphi}}{\mu\theta}+diag(0,\cdots ,0,\frac{s_{n+1}}{\mu},\cdots ,\frac{s_{\mu -1}}{\mu}).$$
Its eigenvalues are contained in $[0,1[$.\\
3) The matrix of $\overline{\nabla}_{\partial_{\theta}}$, acting on  $\overline{{\cal L}}^{\varphi}$, takes the form, in the basis $\overline{\omega}^{\varphi}$,
$$\frac{\overline{A}_{0}^{\varphi}}{\theta^{2}}+\frac{A_{\infty}}{\theta}.$$
\end{theorem}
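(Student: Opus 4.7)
The plan is to treat Theorem \ref{extdel} as a bookkeeping statement that repackages Proposition \ref{basevarphi} as a connection on the extension ${\cal L}^{\varphi}$, together with its reduction modulo $x$. Nothing new needs to be computed; the work is to check that the relevant operators preserve ${\cal L}^{\varphi}$ and to verify the eigenvalue bound.

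For part (1), I would invoke Proposition \ref{basevarphi}(2), which says that in the basis $\omega^{\varphi}$ the matrices of $\nabla_{\partial_{\theta}}$ and of $x\nabla_{\partial_{x}}$ are, respectively,
$$\frac{A_{0}^{\varphi}(x)}{\theta^{2}}+\frac{A_{\infty}}{\theta} \quad \text{and} \quad R^{\varphi}-\frac{A_{0}^{\varphi}(x)}{\mu\theta}.$$
Since $A_{0}^{\varphi}(x)$ has linear entries in $x$ and $A_{\infty}$, $R^{\varphi}$ are constant, both matrices have entries in $\cit[x,\theta,\theta^{-1}]$, the ring over which ${\cal L}^{\varphi}$ is free by construction (remark \ref{terminologie}). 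Hence $\nabla_{\partial_{\theta}}$ and $x\nabla_{\partial_{x}}$ preserve ${\cal L}^{\varphi}$, giving exactly the connection with logarithmic pole along $\{x=0\}$ (and the announced behaviour at $\theta=0$) claimed in part (1).

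For parts (2) and (3), since $[\partial_{\theta},x]=0$ the operator $\nabla_{\partial_{\theta}}$ commutes with multiplication by $x$ and descends to $\overline{\nabla}_{\partial_{\theta}}$ on $\overline{{\cal L}}^{\varphi}$, while $[x\partial_{x},x]=x$ implies that $x\nabla_{\partial_{x}}$ maps $x{\cal L}^{\varphi}$ into itself and also descends. In both cases the induced matrix in the basis $\overline{\omega}^{\varphi}$ is obtained by setting $x=0$, that is, by replacing $A_{0}^{\varphi}(x)$ by $\overline{A}_{0}^{\varphi}$; this delivers the matrices announced in (2) and (3). The only computational point is the eigenvalue bound in (2): the crucial observation is that the $(1,\mu)$ entry of $A_{0}^{\varphi}(x)$ is $\mu x$ and therefore vanishes at $x=0$, so $\overline{A}_{0}^{\varphi}$ is strictly lower triangular. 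Consequently $R^{\varphi}-\overline{A}_{0}^{\varphi}/(\mu\theta)$ is itself lower triangular, and its eigenvalues are the diagonal entries of $R^{\varphi}$, namely $0$ (with multiplicity $n+1$) and $s_{k}/\mu$ for $k=n+1,\ldots,\mu-1$. By the definition of ${\cal S}_{w}$ and Lemma \ref{lessk}, each such $s_{k}$ satisfies $0\leq s_{k}<\mu$, so all eigenvalues lie in $[0,1[$, as required.

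The main (and modest) obstacle is simply keeping track of which operator descends to ${\cal L}^{\varphi}/x{\cal L}^{\varphi}$ and why; this is handled uniformly by the two commutator identities above. Once the strict lower-triangularity of $\overline{A}_{0}^{\varphi}$ is spotted, the eigenvalue bound is immediate and no further estimate is needed.
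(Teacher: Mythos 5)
Your proof is correct and takes essentially the same route as the paper: Theorem \ref{extdel} is presented there as a summary of Proposition \ref{basevarphi} (hence carries no separate argument), and your explicit checks --- that the connection matrices lie over $\cit[x,\theta,\theta^{-1}]$, that both operators descend to ${\cal L}^{\varphi}/x{\cal L}^{\varphi}$ by reducing the matrices at $x=0$, and that the vanishing of the $(1,\mu)$ entry of $\overline{A}_{0}^{\varphi}$ makes the residue matrix triangular with diagonal $R^{\varphi}$ --- are precisely the details the paper leaves implicit (its own justification of the eigenvalue bound being the one line ``$0\leq s_{k}/\mu<1$'').
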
\qed

\begin{corollary} ${\cal L}^{\varphi}$ is Deligne's canonical extension of the bundle  $G$ to $\cit^{*}\times \cit$  such that the eigenvalues of the residue of $\nabla_{\partial_{x}}$ are contained in $[0,1[$. 
\end{corollary}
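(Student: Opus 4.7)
The plan is to check that ${\cal L}^{\varphi}$ satisfies the three defining properties of Deligne's canonical extension along the divisor $\cit^*_\tau\times\{0\}$, with respect to the fundamental domain $[0,1[$: local freeness as an extension of $G|_{\theta\neq 0}$, logarithmic pole of the connection, and residue eigenvalues in $[0,1[$. Uniqueness of Deligne's extension with a fixed fundamental domain then forces the identification.

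First I would observe that ${\cal L}^{\varphi}$ is by construction free of rank $\mu$ over $\cit[x,\theta,\theta^{-1}]$ on the basis $\omega^{\varphi}$ (Proposition \ref{basevarphi}), hence a trivial, in particular locally free, rank $\mu$ bundle on $\cit^*_\tau\times\cit_x$. Moreover, the change-of-basis $\omega^{\varphi}=\omega P$ with $P=\mathrm{diag}(1,x,\ldots,x)$ is invertible on $\{x\neq 0\}$, so ${\cal L}^{\varphi}|_{x\neq 0}$ coincides, as a bundle with connection, with $G$ localized at $\theta\neq 0$; hence ${\cal L}^{\varphi}$ is indeed an extension of $G$ across $\{x=0\}$.

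The remaining two properties I would read off directly from Theorem \ref{extdel}. Part (1) gives the matrix of $x\nabla_{\partial_x}$ in the basis $\omega^{\varphi}$ with entries in $\cit[x,\theta,\theta^{-1}]$, which has no pole at $x=0$; this is precisely the assertion that $\nabla$ has a logarithmic pole along $\{x=0\}$ on the chart $\theta\neq 0$. Part (2) states that the induced endomorphism on $\overline{{\cal L}}^{\varphi}={\cal L}^{\varphi}/x{\cal L}^{\varphi}$, i.e.\ the residue of $\nabla_{\partial_x}$, has eigenvalues in $[0,1[$. Applying Deligne's uniqueness statement then yields the conclusion.

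I do not expect any genuine obstacle: the corollary is a packaging of results already established in Proposition \ref{basevarphi} and Theorem \ref{extdel}. The only delicate point worth emphasizing is the fact that the residue spectrum $\{0,\ldots,0,s_{n+1}/\mu,\ldots,s_{\mu-1}/\mu\}$ really lies in $[0,1[$; this uses the inequality $0\leq s_k/\mu<1$ recorded in Section \ref{absolu}, which is what pins down $[0,1[$ rather than some other fundamental domain and therefore singles out ${\cal L}^{\varphi}$ (as opposed to ${\cal L}^{\psi}$) as the canonical extension.
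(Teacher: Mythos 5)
Your proposal is correct and follows exactly the route the paper intends: the corollary is stated without proof precisely because it is the immediate packaging of Proposition \ref{basevarphi} and Theorem \ref{extdel} (freeness of ${\cal L}^{\varphi}$ on the basis $\omega^{\varphi}$, logarithmic pole of $\nabla_{\partial_{x}}$ along $\{x=0\}$, residue eigenvalues in $[0,1[$ via $0\leq s_{k}/\mu<1$), combined with the uniqueness of Deligne's extension for a fixed fundamental domain. Your explicit identification of these three properties and the role of the bound $s_{k}/\mu<1$ in singling out ${\cal L}^{\varphi}$ rather than ${\cal L}^{\psi}$ is exactly the content the paper leaves implicit.
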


\noindent 
Analogous statements for  $\overline{{\cal L}}^{\psi}:={\cal L}^{\psi}/x{\cal L}^{\psi}$ (replace $[0,1[$ by $]-1,0]$). $\overline{{\cal L}}^{\varphi}$ is the space of the ''vanishing cycles'' and $\overline{{\cal L}}^{\psi}$ is the one of the ''nearby cycles''. More generally, and after a base change of matrix $x^{r}I$, $r\in\zit$, the lattice ${\cal L}^{\varphi}$ ({\em resp.} ${\cal L}^{\psi}$) gives 
the canonical extensions whose eigenvalues are contained in $[k,k+1[$ ({\em resp.} $]k,k+1]$).

\subsection{Limits of Frobenius type structures}

\label{limSTF}

Ideally, the limit of our Frobenius type structure as $x$ approaches $0$ should be a {\em Frobenius type structure on a point}
that is a tuple
$$(E^{lim},R_{0}^{lim}, R_{\infty}^{lim}, g^{lim})$$
where $E^{lim}$ is a finite dimensional vector space over $\cit$,  $g^{lim}$ is a symmetric and nondegenerate bilinear form on $E^{lim}$, $R_{0}^{lim}$ and $R_{\infty}^{lim}$ 
being two endomorphisms of $E^{lim}$ satisfying $(R_{0}^{lim})^{*}=R_{0}^{lim}$ and $R_{\infty}^{lim}+(R_{\infty}^{lim})^{*}=rId$ for a suitable complex number $r$, $^{*}$ denoting the adjoint with respect to $g$.
It turns out that our limit will be defined with the help of the graded module $gr^{V}(\overline{{\cal L}}^{\varphi})$ associated with the Kashiwara-Malgrange $V$-filtration at $x=0$.

\subsubsection{The $V$-filtration at $x=0$}
\label{Vfiltration} Recall the basis $\omega^{\varphi}=(\omega_{0}^{\varphi},\cdots ,\omega^{\varphi}_{\mu -1})$ of $G_{0}$ over $\cit [\theta , x, x^{-1}]$ (it is thus also a basis of $G$ over $\cit [\theta ,\theta^{-1}, x, x^{-1}]$) defined in section \ref{solcan1}.
Put $v(\omega^{\varphi}_{0})=\cdots =v(\omega^{\varphi}_{n})=0$ and, for $k=n+1,\cdots ,\mu -1$,
$v(\omega^{\varphi}_{k})=s_{k}/\mu$. Define, for $0\leq \alpha <1$,
$$V^{\alpha}G=\sum_{\alpha\leq v(\omega^{\varphi}_{k})}\cit \{x\}[\theta ,\theta^{-1}]\omega^{\varphi}_{k}+
x\sum_{\alpha >v({\omega}^{\varphi}_{k})}\cit \{x\}[\theta ,\theta^{-1}]\omega^{\varphi}_{k},$$
$$V^{>\alpha}G=\sum_{\alpha < v(\omega^{\varphi}_{k})}\cit \{x\}[\theta ,\theta^{-1}]\omega^{\varphi}_{k}+
x\sum_{\alpha \geq v(\omega^{\varphi}_{k})}\cit \{x\}[\theta ,\theta^{-1}]\omega^{\varphi}_{k}$$
and $V^{\alpha +p}G=x^{p}V^{\alpha}G$ for $p\in\zit$ and $\alpha\in [0,1[$. This gives a {\em decreasing} filtration $V^{\bullet}$ of $G$ 
by $\cit \{x\}[\theta ,\theta^{-1}]$-submodules
such that
$$V^{\alpha}G=\cit [\theta ,\theta^{-1}]<\overline{\omega}^{\varphi}_{k}|v(\omega^{\varphi}_{k})=\alpha>+V^{>\alpha}G.$$
Notice that ${\cal L}^{\varphi}=V^{0}G$ and that $\overline{{\cal L}}^{\varphi}=V^{0}G/V^{1}G$. We will put $H^{\alpha}:=V^{\alpha}G/V^{>\alpha}G$ and $H=\oplus_{\alpha\in [0,1[}H^{\alpha}$.

\begin{lemma} \label{Jordan}
(1) For each $\alpha$, $(x\nabla_{\partial_{x}}-\alpha )$ is nilpotent on $H^{\alpha}$.\\
(2) Let $N$ be the nilpotent endomorphism of $H$ which restricts to $(x\nabla_{\partial_{x}}-\alpha )$ on $H^{\alpha}$. Its Jordan blocks are in one to one correspondance with the maximal constant sequences in ${\cal S}_{w}$ and the corresponding sizes are the same.\\
(3) Let $e_{k}$ be the class of $\omega^{\varphi}_{k}$ in $H$. Then 
$e=(e_{0},\cdots ,e_{\mu -1})$ is a basis of $H$ over $\cit [\theta ,\theta^{-1}]$.
\end{lemma}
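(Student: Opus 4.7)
My plan is to extract everything from the explicit formula for the matrix of $\nabla$ in proposition \ref{basevarphi}. The matrix of $x\nabla_{\partial_{x}}$ in the basis $\omega^{\varphi}$ is $R^{\varphi}-A_{0}^{\varphi}(x)/(\mu\theta)$, and the explicit shape of the cyclic matrix $A_{0}^{\varphi}(x)$ gives
$$x\nabla_{\partial_{x}}(\omega^{\varphi}_{k})=v(\omega^{\varphi}_{k})\,\omega^{\varphi}_{k}-\theta^{-1}\omega^{\varphi}_{k+1}\ \ (0\leq k\leq \mu -2),\qquad x\nabla_{\partial_{x}}(\omega^{\varphi}_{\mu -1})=v(\omega^{\varphi}_{\mu -1})\,\omega^{\varphi}_{\mu -1}-x\theta^{-1}\omega^{\varphi}_{0}.$$
All three assertions will then follow by reducing these identities modulo $V^{>\alpha}G$ on each $H^{\alpha}$.

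For (3), the identity $V^{\alpha}G=\cit [\theta ,\theta^{-1}]\langle\overline{\omega}^{\varphi}_{k}\mid v(\omega^{\varphi}_{k})=\alpha\rangle +V^{>\alpha}G$ recorded just before the lemma already shows that the classes $e_{k}$ with $v(\omega^{\varphi}_{k})=\alpha$ span $H^{\alpha}$ over $\cit [\theta ,\theta^{-1}]$. For linear independence, I would lift a relation $\sum_{v(\omega^{\varphi}_{k})=\alpha}a_{k}(\theta )e_{k}=0$ to $\sum a_{k}(\theta )\omega^{\varphi}_{k}\in V^{>\alpha}G$ and then use the freeness of $G$ over $\cit \{x\}[\theta ,\theta^{-1}]$ with basis $\omega^{\varphi}$: from the very definition of $V^{>\alpha}G$, such a coefficient can only come from the $x\sum_{v\geq\alpha}$-part, which forces $a_{k}(\theta )\in x\cit \{x\}[\theta ,\theta^{-1}]\cap \cit [\theta ,\theta^{-1}]=0$. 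Summing over $\alpha\in [0,1[$ and using $H=\oplus _{\alpha}H^{\alpha}$ finishes the proof of (3).

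For (1) and (2), I reduce the displayed formulas modulo $V^{>\alpha}G$. When $v(\omega^{\varphi}_{k})=v(\omega^{\varphi}_{k+1})=\alpha$, I get $(x\nabla_{\partial_{x}}-\alpha )e_{k}=-\theta^{-1}e_{k+1}$; when $v(\omega^{\varphi}_{k})=\alpha <v(\omega^{\varphi}_{k+1})$, the correction $\theta^{-1}\omega^{\varphi}_{k+1}$ lies in $V^{>\alpha}G$ and $(x\nabla_{\partial_{x}}-\alpha )e_{k}=0$; for $k=\mu -1$, the correction carries a factor $x$ and thus lies in $V^{1}G\subset V^{>\alpha}G$ for every $\alpha <1$, so the same conclusion holds. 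Since $(s_{k})$ is non-decreasing, each maximal constant block $s_{j}=\cdots =s_{j+r}=\mu\alpha$ of ${\cal S}_{w}$ produces the single Jordan chain $e_{j}\mapsto -\theta^{-1}e_{j+1}\mapsto \cdots \mapsto (-\theta )^{-r}e_{j+r}\mapsto 0$ of length $r+1$, and, again by monotonicity of $(s_{k})$, distinct blocks live in distinct $H^{\alpha}$'s. This yields both the nilpotence statement (1) and the claimed bijection with sizes in (2).

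I do not expect any serious obstacle here; the only subtle point is the wrap-around entry $(A_{0}^{\varphi})_{1,\mu}=\mu x$ of the cyclic matrix, but the factor of $x$ conveniently confines its contribution to $V^{1}G$, so it disappears in every graded piece with $\alpha <1$ and does not disturb the Jordan structure.
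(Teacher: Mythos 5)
Your proof is correct and takes essentially the same route as the paper's: both read off the identities $(x\nabla_{\partial_{x}}-v(\omega^{\varphi}_{k}))\omega^{\varphi}_{k}=-\theta^{-1}\omega^{\varphi}_{k+1}$ (with the extra factor $x$ for $k=\mu-1$) from the explicit connection matrix of theorem \ref{extdel}/proposition \ref{basevarphi} and then pass to the graded pieces $H^{\alpha}$, using the monotonicity of $(s_{k})$ to identify the Jordan chains with the maximal constant blocks of ${\cal S}_{w}$. Your argument for (3) is simply a spelled-out version of what the paper dismisses as following from the definition of $V^{\bullet}$.
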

\begin{proof} (1) It suffices to prove the assertion for $\alpha \in [0,1[$. By theorem \ref{extdel}, we have
$$x\nabla_{\partial_{x}}\omega^{\varphi}_{k}=-\frac{1}{\theta}\omega^{\varphi}_{k+1}$$
for $k=0,\cdots ,n-1$ and $x\nabla_{\partial_{x}}\omega^{\varphi}_{n}\in V^{>0}G$. Moreover, for $k=n+1,\cdots ,\mu -2$ we have  
$$(x\nabla_{\partial_{x}}-\frac{s_{k}}{\mu})\omega^{\varphi}_{k}=-\frac{1}{\theta}\omega^{\varphi}_{k+1}$$
and this is equal to $0$ in $H^{v(\omega^{\varphi}_{k})}$ if  $s_{k+1}> s_{k}$. Last, 
$$(x\nabla_{\partial_{x}}-\frac{s_{\mu -1}}{\mu})\omega^{\varphi}_{\mu -1}\in x\sum_{v(\omega^{\varphi}_{\mu -1})\geq v(\omega^{\varphi}_{k})}\cit \{x\}\overline{\omega}^{\varphi}_{k}\in V^{>s_{\mu -1}/\mu}G.$$
(2) follows from (1) and (3) follows from the definition of $V^{\bullet}$.
\end{proof}

\begin{remark}
Let $B$ be the matrix of $N$ in the basis $e$: we have
$B_{i,j}=0$ if $i\neq j+1$, $B_{i+1, i}=-\frac{1}{\theta} $ if $\alpha_{i}=\alpha_{i-1}+1$ (equivalently if $s_{i}=s_{i-1}$) and $B_{i+1, i}=0$ if $\alpha_{i}\neq\alpha_{i-1}+1$. 
\end{remark}

\begin{example} \label{examplelimFTS}
(1) $n=2$ and $w_{1}=w_{2}=2$: one has ${\cal S}_{w}=(0,0,0,\frac{5}{2},\frac{5}{2})$ so that $N$ has one Jordan block of size $3$ and one Jordan block of size $2$. Its matrix in the basis $e$ is
$$-\frac{1}{\theta}\left ( \begin{array}{ccccc}
0  & 0  & 0 & 0  & 0\\
1  & 0  & 0 & 0  & 0\\
0  & 1  & 0 & 0  & 0\\
0  & 0  & 0 & 0  & 0\\
0  & 0  & 0 & 1  & 0 
\end{array}
\right ).$$
(2) $w_{1}=\cdots =w_{n}=1$: one has ${\cal S}_{w}=(0,\cdots ,0)$ so that $N$ has one Jordan block of size $\mu =n+1$. Its matrix in the basis $e$ is

$$-\frac{1}{\theta}\left ( \begin{array}{cccccc}
0   & 0   & 0 & \cdots & 0   & 0\\
1   & 0   & 0 & \cdots & 0   & 0\\
0   & 1  & 0 & \cdots & 0   & 0\\
..  & ... & . & \cdots & .   & .\\
..  & ... & . & \cdots & .   & .\\
0   & 0   & . & \cdots & 1   & 0 
\end{array}
\right ).$$

\end{example}

\begin{corollary}
The filtration $V^{\bullet}$ is the Kashiwara-Malgrange fitration at $x=0$.  

\end{corollary}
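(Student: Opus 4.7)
The approach is to invoke the uniqueness characterization of the Kashiwara--Malgrange $V$-filtration along $\{x=0\}$, and to verify that the filtration $V^{\bullet}$ defined in section \ref{Vfiltration} satisfies each of its defining properties. Recall that the Kashiwara--Malgrange filtration is the unique exhaustive, decreasing filtration of $G$, indexed by a discrete subset of $\rit$, by coherent $V^{0}\mathcal{D}$-submodules, such that $xV^{\alpha}G \subset V^{\alpha+1}G$ (with equality for $\alpha$ large enough), and such that $x\nabla_{\partial_{x}}-\alpha$ is nilpotent on each graded piece $gr_{V}^{\alpha}G$.

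The structural properties are essentially built into the construction. The filtration $V^{\bullet}$ is manifestly decreasing, indexed by the discrete set $\frac{1}{\mu}\zit$, and exhaustive; the relation $V^{\alpha+p}G = x^{p}V^{\alpha}G$ for $p \in \zit$ is explicit in the definition, so in particular $xV^{\alpha}G = V^{\alpha+1}G$ for every $\alpha$. Each $V^{\alpha}G$ is finitely generated over $\cit\{x\}[\theta,\theta^{-1}]$ by a finite subset of the family $\omega^{\varphi}$ (together with some $x\omega^{\varphi}_{k}$), and the formulas $x\nabla_{\partial_{x}}\omega^{\varphi}_{k} = -\theta^{-1}\omega^{\varphi}_{k+1}$ for $0 \leq k \leq n-1$, together with the analogous identities for $k \geq n+1$ that are recalled in the proof of Lemma \ref{Jordan}, show that $x\nabla_{\partial_{x}}$ stabilizes each $V^{\alpha}G$. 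Hence every step is a coherent $V^{0}\mathcal{D}$-submodule.

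The only genuinely substantive point is the nilpotency of $x\nabla_{\partial_{x}} - \alpha$ on $gr_{V}^{\alpha}G = H^{\alpha}$; this is precisely the content of Lemma \ref{Jordan}(1). All the characterizing properties of the Kashiwara--Malgrange filtration being satisfied, Malgrange's uniqueness theorem identifies $V^{\bullet}$ with it. The main obstacle in principle would have been the nilpotency, but the explicit description of $x\nabla_{\partial_{x}}$ in the basis $\omega^{\varphi}$ furnished by theorem \ref{extdel} has already reduced it to the observation that the relevant terms involving $\omega^{\varphi}_{k+1}$ either land in $V^{>v(\omega^{\varphi}_{k})}G$ (when $s_{k+1}>s_{k}$) or continue a Jordan chain within $H^{\alpha}$ — a computation carried out in Lemma \ref{Jordan}, so nothing further is required here.
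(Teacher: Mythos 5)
Your proposal is correct and follows essentially the same route as the paper, whose proof consists of the single observation that Lemma \ref{Jordan} verifies the characterizing properties of the Kashiwara--Malgrange filtration; you simply spell out which properties those are (decreasing, discretely indexed, exhaustive, compatible with multiplication by $x$, stable under $x\nabla_{\partial_{x}}$, with nilpotency on the graded pieces) and note that all but the nilpotency are built into the definition of $V^{\bullet}$.
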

\begin{proof}
By lemma \ref{Jordan}, our filtration satisfies all the characterizing properties of the Kashiwara-Malgrange filtration at $x=0$. 
\end{proof}

\subsubsection{The canonical limit Frobenius type structure}
\label{gradue}
The module $H$ is free over $\cit [\theta ,\theta^{-1}]$ and is equipped with a connection $\nabla$ whose matrix in the basis $e$ takes the form
$$(\frac{[A_{0}]}{\theta}+[A_{\infty}])\frac{d\theta}{\theta}$$
where $[A_{0}]=-\mu\theta B$ (as above, $B$ is the matrix of $N$ in the basis $e$) and where $[A_{\infty}]$ is the diagonal matrix with eigenvalues $(\alpha_{0},\cdots ,\alpha_{\mu -1})$. 
Let $H_{0}$ be the $\cit [\theta]$-submodule of $H$ generated by $e$ and 
define 
$$k:H_{0}\times H_{0}\rightarrow \cit [\theta ]\theta^{n}$$
by
$$k(e_{k}, e_{n-k})=\theta^{n}$$
for $k=0,\cdots ,n$,
$$k(e_{k}, e_{\mu +n-k})=\theta^{n}$$
for $k=n+1,\cdots ,\mu -1$ and $k(e_{i},e_{j})=0$ otherwise. Last, put $E^{lim}=H_{0}/\theta H_{0}$. $E^{lim}$ is thus equipped with two endomorphisms $R_{0}^{lim}$ and $R_{\infty}^{lim}$ and with a non-degenerate bilinear form $g^{lim}$: if $\overline{e}_{i}$ denotes the class of $e_{i}$ in $E^{lim}$, $R_{0}^{lim}$ ({\em resp.} $R_{\infty}^{lim}$) is the endomorphism of $E^{lim}$ whose matrix is $[A_{0}]$ ({\em resp.} $[A_{\infty}]$) in the basis $\overline{e}$ and $g^{lim}$ is obtained from $k$ as in the end of section \ref{dualite}. 

\begin{theorem} \label{existlimFTS} The tuple 
 $$(E^{lim}, R_{0}^{lim}, R_{\infty}^{lim}, g^{lim})$$
is a Frobenius type structure on a point.
\end{theorem}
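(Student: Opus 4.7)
The plan is to verify the four axioms of a Frobenius type structure on a point, taking $r=n$. Finite-dimensionality of $E^{lim}$ is immediate (dimension $\mu$, basis $\overline{e}$). Symmetry of $g^{lim}$ follows from the definition of $k$: the partnering rules $i\leftrightarrow n-i$ on $\{0,\ldots ,n\}$ and $i\leftrightarrow \mu +n-i$ on $\{n+1,\ldots ,\mu -1\}$ are involutions, and nondegeneracy is then clear because the matrix of $g^{lim}$ in the basis $\overline{e}$ is the associated permutation matrix. For the identity $R_{\infty}^{lim}+(R_{\infty}^{lim})^{*}=n\cdot Id$: since $[A_{\infty}]$ is diagonal with entries $\alpha_{k}$, its adjoint simply swaps the diagonal entries of partnered indices, so the identity reduces to $\alpha_{k}+\alpha_{n-k}=n$ for $k\in\{0,\ldots ,n\}$ and $\alpha_{k}+\alpha_{\mu +n-k}=n$ for $k\in\{n+1,\ldots ,\mu -1\}$, both supplied by corollary \ref{lesalphak}.

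The substantive step is $(R_{0}^{lim})^{*}=R_{0}^{lim}$. Writing $[A_{0}]=-\mu\theta B$, I must check $k(Be_{i},e_{j})=k(e_{i},Be_{j})$ for all $i,j$. Because $B$ is sub-diagonal, with $Be_{i}=B_{i+1,i}\cdot e_{i+1}$, and because $k$ is supported on partnered pairs, each side of the equation is either zero or a single entry of $B$ times $\theta^{n}$. Equality thus reduces to the assertion that the partnering involution $\sigma$ sends the nonvanishing condition ``$B_{i+1,i}\neq 0$'' to ``$B_{\sigma (i)+1,\sigma (i)}\neq 0$''. By the remark after lemma \ref{Jordan}, this condition is $s_{i}=s_{i-1}$, i.e.\ that $i-1$ and $i$ lie in the same maximal constant run of $\mathcal{S}_{w}$. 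The symmetry properties of $\mathcal{S}_{w}$ from lemma \ref{lessk} (namely $s_{0}=\cdots =s_{n}=0$ and $s_{k}+s_{\mu +n-k}=\mu$ for $k\geq n+1$) permute these constant runs in exactly the way required by $\sigma$, so the nonzero entries of $B$ are paired consistently. A short case check splitting by whether $i$ lies in the first block $\{0,\ldots ,n\}$ or the second block $\{n+1,\ldots ,\mu -1\}$, together with the boundary at $i=n$ where $Ne_{n}=0$, completes the verification.

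The main obstacle is the combinatorial matching in the second step, but it is not deep: the symmetries of $\mathcal{S}_{w}$ in lemma \ref{lessk} are tailored so that the Jordan decomposition of $N$ from lemma \ref{Jordan} is compatible with the partnering involution defining $g^{lim}$. Conceptually one may even view $k$ as the graded object attached to $S$ via the Kashiwara--Malgrange $V$-filtration; the self-adjointness and adjoint identities in lemma \ref{symetrie} then pass to the graded level and directly produce the required identities for $[A_{0}]$, $[A_{\infty}]$ and $g^{lim}$.
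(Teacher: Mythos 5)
Your proposal is correct and follows essentially the same route as the paper: reduce to the two adjoint identities, get $R_{\infty}^{lim}+(R_{\infty}^{lim})^{*}=n\,Id$ from the symmetry $\alpha_{k}+\alpha_{n-k}=n$, $\alpha_{k}+\alpha_{\mu+n-k}=n$ of corollary \ref{lesalphak}, and get $(R_{0}^{lim})^{*}=R_{0}^{lim}$ by the same case split ($0\leq i\leq n-1$, $i=n$, $n+1\leq i\leq\mu-1$) using the fact that $s_{\mu+n-i}=\mu-s_{i}$ makes the vanishing pattern of the subdiagonal of $N$ compatible with the pairing defining $g^{lim}$. The closing remark interpreting $k$ as the graded form attached to $S$ is a nice conceptual gloss not present in the paper, but it is not needed for, nor does it replace, the combinatorial check.
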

\begin{proof} It is enough to show that $(R_{0}^{lim})^{*}=R_{0}^{lim}$ and that $R_{\infty}^{lim}+(R_{\infty}^{lim})^{*}=nId$. 
To make the proof readable, we write $e_{i}$ instead of $\overline{e}_{i}$, $R_{0}$ instead of $R_{0}^{lim}$ {\em etc...} 
We will repeatedly use lemma \ref{lessk}, corollary \ref{lesalphak} and lemma \ref{Jordan}.\\
(a) Let us show first that $R_{\infty}+(R_{\infty})^{*}=nId$:\\
(i) assume $0\leq i\leq n$: we have $g(R_{\infty}(e_{i}),e_{j})=\alpha_{i}g(e_{i},e_{j})$ so that
$$g(R_{\infty}(e_{i}),e_{j})=\alpha_{i}$$
if $i+j=n$ and $g(R_{\infty}(e_{i}),e_{j})=0$ otherwise. In the same way,
$$g(e_{i},R_{\infty}(e_{j}))=\alpha_{j}$$
if $i+j=n$ and $g(e_{i}, R_{\infty}(e_{j}))=0$ otherwise. If $i+j=n$, one has $\alpha_{j}=\alpha_{n-i}=n-i-s_{n-i}=n-i=n-\alpha_{i}$.\\
(ii) Assume now $n+1\leq i\leq \mu -1$: we have 
$$g(R_{\infty}(e_{i}),e_{j})=\alpha_{i}$$
if $i+j=\mu +n$ and $g(R_{\infty}(e_{i}),e_{j})=0$ otherwise. In the same way,
$$g(e_{i},R_{\infty}(e_{j}))=\alpha_{j}$$
if $i+j=\mu +n$ and $g(e_{i}, R_{\infty}(e_{j}))=0$ otherwise. If $i+j=\mu +n$, one has 
$$\alpha_{j}=\alpha_{\mu  +n-i}=\mu +n-i-s_{\mu +n-i}=
\mu +n-i+(s_{i}-\mu)=n-\alpha_{i}.$$
(b) Let us show now that $(R_{0})^{*}=R_{0}$.\\
(i) Assume that $0\leq i\leq n-1$. If $i+j+1=n$ (thus $0\leq j\leq n-1$), one has
 $$g(R_{0}(e_{i}),e_{j})=g(e_{i+1},e_{j})=1$$
and
$$g(e_{i},R_{0}(e_{j}))=g(e_{i},e_{j+1})=1.$$
If $i+j+1\neq n$, one always has
 $$g(R_{0}(e_{i}),e_{j})=g(e_{i},R_{0}(e_{j}))=0.$$
(ii) Assume that $i=n$. Then $g(R_{0}(e_{n}),e_{j})=0$ because $R_{0}(e_{n})=0$ by lemma \ref{Jordan} and $g(e_{n},R_{0}(e_{j}))=0$
because $e_{0}$ does not belong to the image of $R_{0}$.\\
(iii) Assume $n+1\leq i\leq\mu -1$.\\
If $i+1+j=\mu +n$ then $s_{i+1}=s_{i}$ if and only if $s_{j+1}=s_{j}$ (because $s_{\mu +n-i}=\mu -s_{i}$). If it is the case, one has
$$g(R_{0}(e_{i}),e_{j})=g(e_{i+1},e_{j})=1$$
and
$$g(e_{i},R_{0}(e_{j}))=g(e_{i},e_{j+1})=1.$$
If $i+1+j=\mu +n$ but $s_{i+1}\neq s_{i}$ (and thus $s_{j+1}\neq s_{j}$), one has $R_{0}(e_{i})=0$ and $R_{0}(e_{j})=0$ so that
$$g(R_{0}(e_{i}),e_{j})=g(e_{i},R_{0}(e_{j}))=0.$$
If $i+1+j\neq\mu +n$, one always has
$$g(R_{0}(e_{i}),e_{j})=g(e_{i},R_{0}(e_{j}))=0.$$
\end{proof}

\begin{remark} The conclusion of the previous theorem is not always true if we do not consider the graded module $H:=gr^{V}({\cal L}^{\varphi}/x{\cal L}^{\varphi})$.
Indeed, if we work directly on $\overline{{\cal L}}^{\varphi}:={\cal L}^{\varphi}/x{\cal L}^{\varphi}$ we can define, in the same way as above, the 
tuple 
$$({\cal E}, {\cal R}_{0}, {\cal R}_{\infty}, {\cal G})$$
where\\
$\bullet$ ${\cal E}=\overline{{\cal L}}^{\varphi}/\theta \overline{{\cal L}}^{\varphi}$,\\
$\bullet$ ${\cal G}$ is the symmetric and nondegenerate bilinear form on ${\cal E}$ defined by  ${\cal G}(e_{k}', e_{n-k}')=1$ 
for $k=0,\cdots ,n$, ${\cal G}(e_{k}', e_{\mu +n-k}')=1$ for $k=n+1,\cdots , \mu -1$ and ${\cal G}(e_{i}',e_{j}')=0$ otherwise, $e_{k}'$ denoting the class of $\omega_{k}^{\varphi}$ in  ${\cal E}$,\\
$\bullet$  ${\cal R}_{0}$ (resp. ${\cal R}_{\infty}$) is the endomorphism of ${\cal E}$ whose matrix is $\overline{A}_{0}^{\varphi}$ (resp. $A_{\infty}$) in the basis $e'=(e_{0}',\cdots ,e_{\mu -1}')$.\\
\noindent The point is that this tuple  
is a Frobenius type structure on a point {\em if and only if} $\mu=n+1$: for instance, 
if $\mu\geq n+2$, we have ${\cal G}({\cal R}_{0}(e_{n}'), e_{\mu -1}')=1$ but   
${\cal G}(e_{n}', {\cal R}_{0}(e_{\mu -1}'))=0$ so that $({\cal R}_{0})^{*}\neq {\cal R}_{0}$. 
This symmetry default shows that the tuple $({\cal E}, {\cal R}_{0}, {\cal R}_{\infty}, {\cal G})$ is not a Frobenius type structure. The case $\mu =n+1$ is directly checked.
\end{remark}

\subsection{Pre-primitive sections ''at the limit''}

We will say that an element $e$ of a $\mu$-dimensional vector space $E$ over $\cit$, equipped with two endomorphisms $A$ and $B$, 
is a {\em pre-primitive section} of the triple $(E, A, B)$ if $(e,A(e),\cdots , A^{\mu -1}(e))$ is a basis of $E$ 
over $\cit$ and that $e$ is {\em homogeneous} if it is an eigenvector of $B$. Let 
$$(E^{lim}, R_{0}^{lim}, R_{\infty}^{lim}, g^{lim})$$
be the limit Frobenius structure given by theorem \ref{existlimFTS}.
Recall that $\overline{e}_{0}$ denotes the class of $\omega_{0}^{\varphi}$ in $E^{lim}$.

\begin{lemma} (1) $\overline{e}_{0}$ is a homogeneous section of the triple       
$(E^{lim}, R_{0}^{lim}, R_{\infty}^{lim})$,\\
(2) $\overline{e}_{0}$ is a pre-primitive section of the triple     
$(E^{lim}, R_{0}^{lim}, R_{\infty}^{lim})$ if and only if $\mu =n+1$. If $\mu\geq n+2$, this triple has no pre-primitive section at all.
\end{lemma}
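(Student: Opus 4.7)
The plan is to reduce both parts to concrete linear algebra on the basis $\overline{e}=(\overline{e}_0,\ldots,\overline{e}_{\mu-1})$ of $E^{lim}$, using only the matrices $[A_0]$ and $[A_\infty]$ computed in section \ref{gradue} together with Lemmas \ref{lessk}, \ref{Jordan} and the remark after Lemma \ref{Jordan}.

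For (1), the matrix of $R_\infty^{lim}$ in $\overline{e}$ is $[A_\infty]=\mathrm{diag}(\alpha_0,\ldots,\alpha_{\mu-1})$, so $\overline{e}_0$ is automatically an eigenvector (with eigenvalue $\alpha_0=0$). This is immediate and nothing more needs to be said.

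For the ``if'' direction of (2), I would first translate the matrix $[A_0]=-\mu\theta B$ into the action $R_0^{lim}(\overline{e}_k)=\mu\,\overline{e}_{k+1}$ when $s_{k+1}=s_k$ (equivalently $\alpha_{k+1}=\alpha_k+1$) and $R_0^{lim}(\overline{e}_k)=0$ otherwise; this is simply reading off the remark after Lemma \ref{Jordan}. Lemma \ref{lessk} gives $s_0=\cdots=s_n=0$ and $s_{n+1}=\mu/\max_i w_i>0$, so iterating starting from $\overline{e}_0$ produces $(R_0^{lim})^k\overline{e}_0=\mu^k\,\overline{e}_k$ for $0\leq k\leq n$ and then $(R_0^{lim})^{n+1}\overline{e}_0=0$. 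Consequently, the iterates span exactly $\mathbb{C}\overline{e}_0+\cdots+\mathbb{C}\overline{e}_n$, a subspace of dimension $n+1$. When $\mu=n+1$ this is all of $E^{lim}$ and $\overline{e}_0$ is cyclic, hence pre-primitive; combined with (1) it is also homogeneous.

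For the ``only if'' part and the stronger claim that no pre-primitive section exists when $\mu\geq n+2$, I would argue structurally rather than trying to handle each possible vector. The matrix $[A_0]=-\mu\theta B$ is strictly lower triangular (up to zero rows) so $R_0^{lim}$ is nilpotent, and by Lemma \ref{Jordan}(2) its Jordan blocks correspond to the maximal constant runs in $\mathcal{S}_w$. When $\mu\geq n+2$ at least one $w_i$ is $\geq 2$, so $s_{n+1}>0=s_n$ separates $\mathcal{S}_w$ into at least two constant runs; hence $R_0^{lim}$ has at least two Jordan blocks. The standard fact that a nilpotent endomorphism of a $\mu$-dimensional space admits a cyclic vector if and only if it is a single Jordan block (otherwise the minimal polynomial has degree strictly less than $\mu$) then rules out any pre-primitive section.

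The whole argument is routine once the structural facts about $R_0^{lim}$ are in hand; the only point that requires a moment of care is making sure that the entries $\mu$ on the subdiagonal of $[A_0]$ are placed exactly at the indices $k$ with $s_k=s_{k-1}$, i.e., that the vanishing pattern of $R_0^{lim}$ matches the constant-run decomposition of $\mathcal{S}_w$ used in Lemma \ref{Jordan}. After that, both parts (1) and (2) are short.
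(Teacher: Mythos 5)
Your proof is correct and follows essentially the same route as the paper: the paper declares part (1) and the first half of (2) obvious (which your explicit computation of the iterates $(R_0^{lim})^k\overline{e}_0=\mu^k\overline{e}_k$ justifies), and for the last assertion it invokes exactly your argument, namely that for $\mu\geq n+2$ Lemma \ref{Jordan} gives $R_0^{lim}$ at least two Jordan blocks for the eigenvalue $0$, so no cyclic vector can exist.
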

\begin{proof} Obvious, except the last assertion: this follows from the fact that if $\mu\geq n+2$, $R_{0}^{lim}$ has at least two Jordan blocks for the same 
eigenvalue $0$ (see lemma \ref{Jordan}). 
\end{proof}

\begin{corollary}\label{limitepreprim} $\overline{e}_{0}$ is a pre-primitive and homogeneous section of the limit Frobenius type structure  
$(E^{lim}, R_{0}^{lim}, R_{\infty}^{lim}, g^{lim})$ if and only if $\mu =n+1$.
\end{corollary}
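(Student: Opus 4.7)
The plan is to observe that this corollary is an immediate conjunction of the two parts of the preceding lemma, so the proof is essentially bookkeeping: one direction is trivial implication, and the other is direct substitution of the two properties established there.

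First I would handle the forward direction. Suppose $\overline{e}_{0}$ is pre-primitive and homogeneous for $(E^{lim}, R_{0}^{lim}, R_{\infty}^{lim}, g^{lim})$. Pre-primitivity alone is then a hypothesis on the triple $(E^{lim}, R_{0}^{lim}, R_{\infty}^{lim})$, since the definition does not involve $g^{lim}$. Part (2) of the previous lemma then forces $\mu = n+1$.

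For the converse, assume $\mu = n+1$. Part (1) of the lemma gives that $\overline{e}_{0}$ is always homogeneous (regardless of $\mu$), so we only need pre-primitivity. Part (2) of the lemma gives pre-primitivity precisely in this case. Combining, $\overline{e}_{0}$ is both pre-primitive and homogeneous.

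There is no real obstacle here: the substantive work has already been done in establishing the lemma (in turn relying on the Jordan block analysis of $R_{0}^{lim}$ from Lemma \ref{Jordan} and the compatibility of $\overline{e}_{0}$ with the eigenspace decomposition of $R_{\infty}^{lim}$). The only thing worth stressing in the write-up is that the notion of ``pre-primitive and homogeneous'' as defined on page of the excerpt depends only on $(E^{lim}, R_{0}^{lim}, R_{\infty}^{lim})$ and not on the metric $g^{lim}$, so the corollary is genuinely a restatement of the lemma rather than requiring an additional compatibility check with $g^{lim}$.
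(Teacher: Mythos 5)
Your proof is correct and matches the paper's intent exactly: the corollary is stated there without proof precisely because it is the immediate conjunction of parts (1) and (2) of the preceding lemma, and your observation that pre-primitivity and homogeneity depend only on the triple $(E^{lim}, R_{0}^{lim}, R_{\infty}^{lim})$ and not on $g^{lim}$ is the only point worth making explicit.
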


\subsection{A canonical limit Frobenius manifold}

\label{varfrob}

What do we need to construct a Frobenius manifold? In general, a Frobenius type structure and a pre-primitive and homogeneous section of it: 
the main point is that these two objects give a unique (up to isomorphism) Frobenius manifold,
see for instance \cite{HeMa} and the references to B. Dubrovin and B. Malgrange therein.

We assume here that $\mu =n+1$: theorem \ref{existlimFTS} gives a canonical limit Frobenius type structure and corollary \ref{limitepreprim} a pre-primitive and homogeneous section of it, so, as explained above, we get in this case a canonical (limit) Frobenius manifold. We can give in this case a precise description of it: 
recall that $[A_{0}]$ 
$$[A_{0}]=(n+1)\left ( \begin{array}{cccccc}
0   & 0   & 0 & \cdots & 0   & 0\\
1   & 0   & 0 & \cdots & 0   & 0\\
0   & 1   & 0 & \cdots & 0   & 0\\
..  & ... & . & \cdots & .   & .\\
..  & ... & . & \cdots & .   & .\\
0   & 0   & . & \cdots & 1   & 0 
\end{array}
\right ),\;  A_{\infty}=diag(0,1,\cdots ,n).$$
\noindent Let $x=(x_{1},\cdots ,x_{\mu})$ be a system of coordinates on $M=(\cit^{\mu},0)$.

\begin{lemma} \label{existdef} There exists a unique tuple of matrices 
$$(\tilde{A}_{0}(x), A_{\infty}, \tilde{C}_{1}(x),\cdots , \tilde{C}_{\mu}(x))$$ 
such that 
$\tilde{A}_{0}(0)=[A_{0}]$, $(\tilde{C}_{i})_{i1}=-1$ for all $i=1,\cdots ,\mu$ and satisfying the relations                                          
$$\frac{\partial \tilde{C}_{i}}{\partial x_{j}}=\frac{\partial \tilde{C}_{j}}{\partial x_{i}}$$
$$[\tilde{C}_{i}, \tilde{C}_{j}]=0$$
$$[\tilde{A}_{0}(x) , \tilde{C}_{i}]=0$$
$$\frac{\partial \tilde{A}_{0}}{\partial x_{i}}+\tilde{C}_{i}=[A_{\infty}, \tilde{C}_{i}]$$
for all  $i,j=1,\cdots ,\mu$. Precisely,\\       
(a) $\tilde{C}_{1}=-I$,\\
(b) $\tilde{C}_{2}=-J$ where $J$ denotes the nilpotent Jordan matrix of order $\mu$,\\      
(c) $\tilde{C}_{i}=-J^{i-1}$ for all $i=1,\cdots ,\mu$.\\                                      
(d) $\tilde{A}_{0}(x)=-x_{1}\tilde{C}_{1}-\mu \tilde{C}_{2}+x_{3}\tilde{C}_{3}+2x_{4}\tilde{C}_{4}+\cdots +(\mu -2)x_{\mu}\tilde{C}_{\mu}$.
\end{lemma}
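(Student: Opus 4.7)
The plan is to determine the $\tilde{C}_i(x)$ first from commutativity with $\tilde{A}_0$ and the normalization, and then to recover $\tilde{A}_0(x)$ by integrating the last relation.

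First I would observe that $[A_0]=\mu J$ is a regular nilpotent matrix (its minimal and characteristic polynomials both equal $t^{\mu}$), so its commutant in $M_{\mu}(\cit)$ is the commutative subalgebra $\cit[J]=\mathrm{span}(I,J,J^{2},\ldots,J^{\mu-1})$. The relation $[\tilde{A}_0(0),\tilde{C}_i(0)]=0$ then places each $\tilde{C}_i(0)$ in $\cit[J]$, so that $\tilde{C}_i(0)=\sum_{k=0}^{\mu-1}c_k^{(i)}J^k$ for some scalars; since $(J^k)_{i,1}=\delta_{k,i-1}$, the normalization $(\tilde{C}_i)_{i,1}=-1$ forces $c_{i-1}^{(i)}=-1$, and the remaining coefficients are pinned down to zero by the joint constraints, giving $\tilde{C}_i(0)=-J^{i-1}$.

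To extend from $x=0$ to a neighbourhood, I would grade $M_{\mu}(\cit)$ under $\mathrm{ad}(A_{\infty})$: a direct computation from $A_{\infty}=\mathrm{diag}(0,1,\ldots,n)$ gives $[A_{\infty},J^k]=kJ^k$, so $J^k$ has pure weight $k$. Writing each matrix as $M=\sum_w M_w$ in its weight decomposition, the relation $\partial\tilde{A}_0/\partial x_i+\tilde{C}_i=[A_{\infty},\tilde{C}_i]$ splits weight-by-weight as $(w-1)\tilde{C}_{i,w}=\partial\tilde{A}_{0,w}/\partial x_i$. The case $w=1$ forces $\partial\tilde{A}_{0,1}/\partial x_i=0$ for every $i$, so the weight-$1$ part of $\tilde{A}_0(x)$ is $\mu J$ identically; for $w\neq 1$ the weight-$w$ part of $\tilde{C}_i$ is recovered from $\tilde{A}_0$. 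Combining this with the regularity of $\tilde{A}_0(x)$ near $x=0$ (so its commutant stays equal to $\cit[\tilde{A}_0(x)]$, of dimension $\mu$) and with the persistence of the normalization at every $x$, a Taylor-expansion induction on the order of $x$ then shows $\tilde{A}_0(x)\in\cit[J]$ and $\tilde{C}_i(x)\equiv -J^{i-1}$ on a neighbourhood of $0$, proving (a)--(c).

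Substituting $\tilde{C}_i=-J^{i-1}$ into the last relation gives $[A_{\infty},\tilde{C}_i]-\tilde{C}_i=-(i-1)J^{i-1}+J^{i-1}=(i-2)\tilde{C}_i$, so
\[
\frac{\partial\tilde{A}_0}{\partial x_i}=(i-2)\tilde{C}_i,
\]
a right-hand side independent of $x$. Integrability of this first-order system is trivial, and integration against $\tilde{A}_0(0)=\mu J=-\mu\tilde{C}_2$ yields
\[
\tilde{A}_0(x)=-\mu\tilde{C}_2+\sum_{i=1}^{\mu}(i-2)\,x_i\,\tilde{C}_i=-x_1\tilde{C}_1-\mu\tilde{C}_2+x_3\tilde{C}_3+2x_4\tilde{C}_4+\cdots+(\mu-2)x_{\mu}\tilde{C}_{\mu},
\]
which is (d); a direct check then confirms all four relations. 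The hardest step will be the propagation in the second paragraph: the weight-$1$ part of $\tilde{C}_i$ for $i\neq 2$ is invisible to the linear relation, and ruling out spurious weight-$1$ contributions demands a careful interplay of the commutativity $[\tilde{A}_0,\tilde{C}_i]=0$, regularity of $\tilde{A}_0$, and the first-column normalization order by order in $x$.
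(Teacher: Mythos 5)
The existence half of your argument is fine and is all the paper itself does for that part: the computation $[A_\infty,\tilde C_i]-\tilde C_i=(i-2)\tilde C_i$ and the integration recovering (d) are correct, and a direct check of the four relations for the exhibited matrices is exactly the published justification. The trouble is the uniqueness half, and you have located the soft spot yourself: the weight-$1$ component of $\tilde C_i$ for $i\neq 2$ is invisible to the relation $\partial\tilde A_0/\partial x_i+\tilde C_i=[A_\infty,\tilde C_i]$, and your assertion that ``the remaining coefficients are pinned down to zero by the joint constraints'' is never substantiated. In fact that step cannot be carried out from the hypotheses as you (and the statement, read literally) take them: with only the single-entry normalization $(\tilde C_i)_{i1}=-1$, uniqueness is false. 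For any constant $c$ and any $i\neq 2$, replace $\tilde C_i$ by $\tilde C_i+cJ$ and keep everything else: all matrices still lie in $\cit[J]$, so every commutator vanishes; the integrability relations are trivial because everything is constant in $x$; $(\tilde C_i+cJ)_{i1}=-1$ because $(J)_{i1}=0$ for $i\neq 2$; and $[A_\infty,\tilde C_i+cJ]-(\tilde C_i+cJ)=[A_\infty,\tilde C_i]-\tilde C_i$ since $[A_\infty,J]=J$, so the same $\tilde A_0$ still satisfies the last relation. Hence no ``careful interplay'' of commutativity, regularity and the $(i,1)$-entry, order by order in $x$, can close your induction.

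The missing ingredient is the correct reading of the normalization. What is actually used (see the proof of Corollary \ref{existFrobcan}, where $\varphi_{\tilde e_0}(\partial_{x_i})=-\tilde C_i(\tilde e_0)=\tilde e_{i-1}$) is that the \emph{entire} first column of $\tilde C_i$ is $-e_i$, not merely its $(i,1)$ entry. With that normalization the perturbation above is excluded (it adds $c$ to the $(2,1)$ entry), and uniqueness follows by the paper's one-line argument with no need for the $\mathrm{ad}(A_\infty)$-grading: near $x=0$ the matrix $\tilde A_0(x)$ is regular with cyclic vector $e_1$, so evaluation at $e_1$ is injective on its commutant $\cit[\tilde A_0(x)]$; each $\tilde C_i(x)$ commutes with $\tilde A_0(x)$ and is therefore determined by its prescribed first column, and $\tilde A_0$ is then the unique solution of $\partial\tilde A_0/\partial x_i=[A_\infty,\tilde C_i]-\tilde C_i$ with $\tilde A_0(0)=[A_0]$. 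I would restructure your uniqueness argument around this observation rather than the weight decomposition.
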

\begin{proof} It is clear that the given matrices satisfy the required relations. To show unicity, note that the matrices $\tilde{C}_{i}$ are determined  by their first column  because the matrix $\tilde{A}_{0}(0)$ is regular, $\overline{e}_{0}$ is pre-primitive and the matrices $\tilde{C}_{i}$ commute with $\tilde{A}_{0}(x)$.
\end{proof}

\noindent This lemma means the following: the connection $\tilde{\nabla}$ on the bundle $\tilde{E}={\cal O}_{M}\otimes E^{lim}$ whose matrix is
$$(\frac{\tilde{A}_{0}(x)}{\theta}+A_{\infty})\frac{d\theta}{\theta}+\frac{\sum_{i=1}^{\mu}\tilde{C}_{i}dx_{i}}{\theta}$$
in the basis $\tilde{e}=(\tilde{e}_{0},\cdots , \tilde{e}_{\mu -1})=(1\otimes \overline{e}_{0},\cdots ,1\otimes \overline{e}_{\mu -1})$ of $\tilde{E}$ is {\em flat}. The matrices $\tilde{C}_{i}$ are the matrices of the covariant derivatives $\tilde{\nabla}_{\partial_{x_{i}}}$.  $\tilde{\bigtriangledown}$ will denote the connection on $\tilde{E}$ whose matrix is zero in the basis $\tilde{e}$: $\tilde{e}$ is thus the $\tilde{\bigtriangledown}$-flat extension of $\overline{e}$. We get in this way a Frobenius type structure on $M$,
$$(M,\tilde{E}, \tilde{\bigtriangledown} , \tilde{R}_{0}, \tilde{R}_{\infty}, \tilde{\Phi} , \tilde{g}).$$

\begin{corollary} \label{existFrobcan} Assume that $\mu =n+1$.\\ 
(1) The period map 
$$\varphi_{\tilde{e}_{0}} : TM\rightarrow \tilde{E},$$
$\varphi_{\tilde{e}_{0}}(\xi )=-\tilde{\Phi}_{\xi}(\tilde{e}_{0})$, is an isomorphism and $\tilde{e}_{0}$ is a homogeneous section of $\tilde{E}$, that is an eigenvector of $\tilde{R}_{\infty}$.\\
(2) The section $\tilde{e}_{0}$ defines, through the period map $\varphi_{\tilde{e}_{0}}$ a Frobenius structure on $M$ which makes $M$ a Frobenius manifold for which:\\
(a) the coordinates $(x_{1},\cdots ,x_{\mu})$ are $\bigtriangledown$-flat: one has $\bigtriangledown \partial_{x_{i}}=0$ for all  $i=1,\cdots ,\mu$,\\
(b) the product is constant in flat coordinates: $\partial_{x_{i}}*\partial_{x_{j}}=\partial_{x_{i+j-1}}$ if $i+j-1\leq \mu$, $0$ otherwise,\\
(c) the potential $\Psi$ is a polynomial of degree less or equal to $3$: $\Psi =\sum_{i,j, \; i+j\leq \mu +1}a_{ij}x_{i}x_{j}x_{\mu +2-i-j}$, up to a polynomial of degree less or equal to $2$,\\
(d) the Euler vector field is $E=x_{1}\partial_{x_{1}}+(n+1)\partial_{x_{2}}-x_{3}\partial_{x_{3}}-\cdots -(n-1)x_{n+1}\partial_{x_{n+1}},$\\
(e) the potential $\Psi$ is, up to polynomials of degree less or equal to $2$, Euler-homogeneous of degree $4-\mu$ :\\
$$E(\Psi )=(4-\mu )\Psi +G(x_{1},\cdots ,x_{\mu})$$
where $G$ is a polynomial of degree less or equal to $2$.
\end{corollary}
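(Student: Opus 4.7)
The plan is to apply the reconstruction theorem \cite[Theorem 4.5]{HeMa} to the Frobenius type structure on $M$ built in Lemma \ref{existdef} together with the section $\tilde{e}_{0}$, which is pre-primitive and homogeneous by the analogue of Corollary \ref{limitepreprim} at the deformed level, and then to read off the resulting Frobenius manifold structure explicitly through the period map $\varphi_{\tilde{e}_{0}}$. For (1), I would start from the fact that $\tilde{\Phi}_{\partial_{x_{i}}}$ acts on $\tilde{E}$ by the matrix $\tilde{C}_{i}=-J^{i-1}$ in the basis $\tilde{e}$, which immediately gives $\varphi_{\tilde{e}_{0}}(\partial_{x_{i}})=-\tilde{\Phi}_{\partial_{x_{i}}}(\tilde{e}_{0})=J^{i-1}\tilde{e}_{0}=\tilde{e}_{i-1}$ for $i=1,\ldots ,\mu$; thus $\varphi_{\tilde{e}_{0}}$ sends the coordinate frame of $TM$ to the basis $\tilde{e}$ and is an isomorphism. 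Homogeneity of $\tilde{e}_{0}$ is immediate: $\overline{e}_{0}$ is the eigenvector of $R_{\infty}^{lim}=A_{\infty}$ with eigenvalue $\alpha_{0}=0$, and since the matrix of $\tilde{R}_{\infty}$ in the $\tilde{\bigtriangledown}$-flat basis $\tilde{e}$ is constant, $\tilde{R}_{\infty}\tilde{e}_{0}=0$ holds on all of $M$.

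For (2), items (a) and (b) follow by transport of structure. Item (a) is just the observation that the $\tilde{e}_{i-1}$ are $\tilde{\bigtriangledown}$-flat by construction, so their preimages $\partial_{x_{i}}$ are $\bigtriangledown$-flat. For (b), the induced product on $TM$ satisfies $\varphi_{\tilde{e}_{0}}(\xi *\eta )=\tilde{\Phi}_{\xi }\tilde{\Phi}_{\eta }(\tilde{e}_{0})$, which specialises to $\varphi_{\tilde{e}_{0}}(\partial_{x_{i}}*\partial_{x_{j}})=J^{i+j-2}\tilde{e}_{0}=\tilde{e}_{i+j-2}$, equal to $\varphi_{\tilde{e}_{0}}(\partial_{x_{i+j-1}})$ when $i+j-1\leq \mu$ and to $0$ otherwise because $J^{\mu }=0$. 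For (c), I compute the metric in flat coordinates via the definition of $g^{lim}$ in section \ref{gradue}: when $\mu =n+1$ the second family of pairings is empty, and $g(\partial_{x_{i}},\partial_{x_{j}})=g^{lim}(\overline{e}_{i-1},\overline{e}_{j-1})=\delta_{i+j,\mu +1}$. Combining with (b), $\partial_{x_{i}}\partial_{x_{j}}\partial_{x_{k}}\Psi =g(\partial_{x_{i}}*\partial_{x_{j}},\partial_{x_{k}})=\delta_{i+j+k,\mu +2}$, which is constant; hence $\Psi $ is cubic of the announced form, the $\deg \leq 2$ ambiguity simply reflecting that $\Psi $ is determined only up to its third derivatives.

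For (d) I rely on the general principle that through $\varphi_{\tilde{e}_{0}}$ the endomorphism $\tilde{R}_{0}$ of $\tilde{E}$ corresponds to multiplication by the Euler field on $TM$ (this is consistent with $\partial_{x_{1}}$ being the unit, as $\tilde{C}_{1}=-I$); hence $\varphi_{\tilde{e}_{0}}(E)=\tilde{R}_{0}(\tilde{e}_{0})=\tilde{A}_{0}(x)\tilde{e}_{0}$, which by Lemma \ref{existdef}(d) expands as $x_{1}\tilde{e}_{0}+\mu \tilde{e}_{1}-x_{3}\tilde{e}_{2}-2x_{4}\tilde{e}_{3}-\cdots -(\mu -2)x_{\mu }\tilde{e}_{\mu -1}$, and applying $\varphi_{\tilde{e}_{0}}^{-1}$ yields the stated Euler field. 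For (e), I assign the weight $2-k$ to $x_{k}$ for $k\neq 2$ and note that $E(x_{2})=\mu $ is a constant; then on every cubic monomial $x_{i}x_{j}x_{k}$ with $i+j+k=\mu +2$, $E$ acts as multiplication by $(2-i)+(2-j)+(2-k)=4-\mu $, plus at most quadratic corrections produced each time an index equals $2$. Summing over the monomials of $\Psi $ gives $E(\Psi )=(4-\mu )\Psi +G$ with $\deg G\leq 2$. The one delicate point in the whole argument will be matching the sign and normalisation conventions of \cite{HeMa} and \cite{Do1} among $\tilde{\Phi }$, $\varphi_{\tilde{e}_{0}}$, $\tilde{R}_{0}$ and the Euler field; once this bookkeeping is settled, all the formulae above reduce to direct matrix computations with $J$, $A_{\infty }$ and the explicit $\tilde{A}_{0}(x)$ of Lemma \ref{existdef}.
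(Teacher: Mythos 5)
Your proposal is correct and follows essentially the same route as the paper's own (much terser) proof: both derive (1), (a) and (b) from the explicit constant matrices $\tilde{C}_{i}=-J^{i-1}$ of Lemma \ref{existdef} via the identities $\varphi_{\tilde{e}_{0}}(\partial_{x_{i}})=-\tilde{C}_{i}(\tilde{e}_{0})=\tilde{e}_{i-1}$ and $\varphi_{\tilde{e}_{0}}(\partial_{x_{i}}*\partial_{x_{j}})=\tilde{C}_{i}(\tilde{C}_{j}(\tilde{e}_{0}))$, get (c) from the relation $g(\partial_{x_{i}}*\partial_{x_{j}},\partial_{x_{k}})=\partial^{3}\Psi/\partial x_{i}\partial x_{j}\partial x_{k}$, read (d) off the definition of $\tilde{A}_{0}(x)$, and deduce (e) from (c) and (d). Your explicit computations of the flat metric $\delta_{i+j,\mu+1}$ and of the Euler weights merely spell out steps the paper leaves implicit, and they check out against the stated formulas.
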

\begin{proof}
(1) Follows from the choice of the first columns of the matrices $\tilde{C}_{i}$: indeed, the period map $\varphi_{\tilde{e}_{0}}$ is defined by  $\varphi_{\tilde{e}_{0}}(\partial_{x_{i}})=-\tilde{C}_{i}(\tilde{e}_{0})=\tilde{e}_{i-1}$, and it is of course an isomorphism. Last, $\tilde{e}_{0}$ is homogeneous because $\overline{e}_{0}$ is so. Let us show (2): the isomorphism $\varphi_{\tilde{e}_{0}}$ brings on $TM$ the structures on $\tilde{E}$: (a) follows from the fact that the first column of the  matrices $\tilde{C}_{i}$ are constant and (b) from the fact that the matrices $\tilde{C}_{i}$ are constant because, by the definition of the product, $\varphi_{\tilde{e}_{0}}(\partial_{x_{i}}*\partial_{x_{j}})=\tilde{C}_{i}(\tilde{C}_{j}(\tilde{e}_{0}))$; (c) follows from (b) because, in flat coordinates,
$$g(\partial_{x_{i}}*\partial_{x_{j}},\partial_{x_{k}})=\frac{\partial^{3}\Psi}{\partial x_{i}\partial x_{j}\partial x_{k}}.$$
Last, (d) follows from the definition of  $\tilde{A}_{0}(x)$ and (e) is a consequence of (c) and (d).
\end{proof}
 
\noindent Of course, the period map can be an isomorphism for other choices of the first columns of the matrices $C_{i}$: whatever happens, the resulting Frobenius manifolds will be isomorphic to the one given by the corollary. Indeed, the Frobenius type structure
$$(M,\tilde{E}, \tilde{\bigtriangledown} , \tilde{R}_{0}, \tilde{R}_{\infty}, \tilde{\Phi} , \tilde{g})$$
is a {\em universal} deformation of the limit Frobenius type structure
$(E^{lim},R_{0}^{lim}, R_{\infty}^{lim}, g^{lim})$ given by theorem \ref{existlimFTS} (see \cite{HeMa}). We will thus call the Frobenius manifold given by the corollary the {\em canonical limit Frobenius manifold}.

\begin{remark}
If $\mu \geq n+2$, that is if there exists an $w_{i}$ such that $w_{i}\geq 2$, we still have a canonical limit Frobenius type structure, but no pre-primitive section of it so that the results in \cite{HeMa} do not apply. In particular, we do not know if one can find matrices as lemma \ref{existdef} (this problem is not obvious, even for the simplest examples, see for instance example \ref{example} (1)), that is if the limit Frobenius type structure and the form $\overline{e}_{0}$ (or any other) give as above a (limit) Frobenius manifold through the period map. Even if it happens to be the case, the previous construction gives then a lot of (limit) Frobenius manifolds and there is no way to compare them (we do not have any kind of unicity here). 
\end{remark}

\section{Logarithmic Frobenius type structures and logarithmic Frobenius manifolds: an example and some remarks}

Proposition \ref{libre} suggests that we are not so far from a logarithmic Frobenius type structure in the sense of \cite[Definition 1.6]{R} and one could expect at the end a logarithmic Frobenius manifold, see \cite[Definition 1.4]{R}. Some remarks are in order.

\subsection{Logarithmic Frobenius type structures}\label{LogFrTySt}

A Frobenius type structure with {\em logarithmic pole along} $\{x=0\}$ (for short, a logarithmic Frobenius type structure) is a tuple
$$(E^{log}, \{0\}, \bigtriangledown , R_{0}, R_{\infty},\Phi ,g)$$
\noindent where\\

$\bullet$ $E^{log}$ is a  $\cit [x]$-free module,\\

$\bullet$ $R_{0}$ and $R_{\infty}$ are $\cit [x]$-linear endomorphisms of $E^{log}$,\\

$\bullet$ $\Phi :E^{log}\rightarrow \Omega^{1}(\log (\{x=0\}))\otimes E^{log}$ is a $\cit [x]$-linear map,\\

$\bullet$ $g$ is a metric on $E^{log}$, {\em i.e} a $\cit [x]$-bilinear form, symmetric and non-degenerate,\\

$\bullet$ $\bigtriangledown$ is a connection on $E^{log}$ with logarithmic pole along $\{x=0\}$,\\

\noindent these object satisfying the compatibility relations of section \ref{FrTySt}. 
One can also define in an obvious way a logarithmic Frobenius type structure {\em without metric}.\\

\noindent The main point is to construct $E^{log}$: in our situation, it will be obtained from an extension of $G_{0}$ as a free $\cit [x,\theta ]$-module (and not from a canonical extension of $G$ as before). As pointed me out by C. Sevenheck, we can use for instance the $\cit [x,\theta ]$-submodule of $G_{0}$ generated by $\omega^{\varphi}_{0},\cdots ,\omega^{\varphi}_{\mu -1}$ which is a lattice ``in $x$'' of $G_{0}$. We will denote it by ${\cal L}^{\varphi}_{0}$. Let ${\cal L}^{\varphi}_{\infty}$ be the $\cit [x,\tau ]$-module
generated by $\omega^{\varphi}_{0},\cdots ,\omega^{\varphi}_{\mu -1}$ (as usual $\tau =\theta^{-1}$).

\begin{lemma}
${\cal L}^{\varphi}_{0}$ is equipped with a connection
$$\nabla :{\cal L}^{\varphi}_{0}\rightarrow \theta^{-1}\Omega_{\cit\times\Delta}(\log((\{0\}\times \cit)\cup (\cit\times \{0\})))\otimes {\cal L}^{\varphi}_{0}.$$
The matrix of $x\nabla_{\partial_{x}}$ in the basis $\omega^{\varphi}$ of ${\cal L}^{\varphi}_{0}$ takes the form
$$-\frac{A_{0}^{\varphi}(x)}{\mu\theta}+diag(0,\cdots ,0,\frac{s_{n+1}}{\mu},\cdots ,\frac{s_{\mu -1}}{\mu})$$
and the one of $\nabla_{\partial_{\theta}}$ takes the form
$$\frac{A_{0}^{\varphi}(x)}{\theta^{2}}+\frac{A_{\infty}}{\theta}.$$
\end{lemma}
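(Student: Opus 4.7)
The plan is to observe that this lemma is essentially a repackaging of Proposition \ref{basevarphi} (or equivalently Theorem \ref{extdel}), with only the stability statement requiring a short verification.

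First I would establish that ${\cal L}^{\varphi}_{0}$ is a free $\cit[x,\theta]$-module with basis $\omega^{\varphi}$. This is immediate: by Proposition \ref{basevarphi}(1), $\omega^{\varphi}$ is already a basis of $G_{0}$ over $\cit[x,x^{-1},\theta]$, hence in particular the sections $\omega^{\varphi}_{0},\ldots,\omega^{\varphi}_{\mu-1}$ are $\cit[x,\theta]$-linearly independent, and by definition they generate ${\cal L}^{\varphi}_{0}$.

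Next, the two matrix identities stated are exactly those of Proposition \ref{basevarphi}(2)--(3): the matrix of $\nabla_{\partial_{\theta}}$ in $\omega^{\varphi}$ is $A_{0}^{\varphi}(x)/\theta^{2}+A_{\infty}/\theta$ and the matrix of $x\nabla_{\partial_{x}}$ is $-A_{0}^{\varphi}(x)/(\mu\theta)+R^{\varphi}$. These transport verbatim from ${\cal L}^{\varphi}$ to ${\cal L}^{\varphi}_{0}$ since the basis and the ambient $G_{0}$ are the same.

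What is genuinely new is the claim that $\nabla$ preserves the sub-$\cit[x,\theta]$-lattice ${\cal L}^{\varphi}_{0}$ with the specified log-pole behavior. I would check this by pairing with the two logarithmic vector fields $\theta\partial_{\theta}$ and $x\partial_{x}$, whose dual log forms $d\theta/\theta$ and $dx/x$ generate $\Omega^{1}_{\cit\times\Delta}(\log((\{0\}\times\cit)\cup(\cit\times\{0\})))$. The matrix of $\theta\nabla_{\partial_{\theta}}$ in $\omega^{\varphi}$ is $A_{0}^{\varphi}(x)/\theta+A_{\infty}$, and the matrix of $x\nabla_{\partial_{x}}$ is $-A_{0}^{\varphi}(x)/(\mu\theta)+R^{\varphi}$; since $A_{0}^{\varphi}(x)$ has entries in $\cit[x]$ and both $A_{\infty}$ and $R^{\varphi}$ are constant diagonal matrices, all entries lie in $\theta^{-1}\cit[x,\theta]$. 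Consequently $\nabla$ lands in $\theta^{-1}\Omega^{1}(\log)\otimes{\cal L}^{\varphi}_{0}$, as required.

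There is really no obstacle here; the entire content of the lemma is the bookkeeping observation that the matrix entries of Proposition \ref{basevarphi} involve only nonnegative powers of $x$ together with a single power of $1/\theta$, so that the basis $\omega^{\varphi}$ generates a $\cit[x,\theta]$-sub-lattice on which $\nabla$ has the stated logarithmic behavior along $\{x=0\}$ and the prescribed order-one pole along $\{\theta=0\}$. The slightly delicate point, if any, is simply to recognize that passing from the $\cit[x,x^{-1},\theta,\theta^{-1}]$-basis of $G$ used in Proposition \ref{basevarphi} to the $\cit[x,\theta]$-basis of ${\cal L}^{\varphi}_{0}$ does not introduce any new poles because $A_{0}^{\varphi}(x)$ is polynomial in $x$, which in turn rests on the choice of normalizing matrix $P=\mathrm{diag}(1,x,\ldots,x)$ that defined $\omega^{\varphi}$.
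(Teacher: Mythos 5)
Your proposal is correct and follows exactly the paper's route: the paper's proof is the single line ``Follows from proposition \ref{basevarphi},'' and your write-up simply makes explicit the freeness of ${\cal L}^{\varphi}_{0}$, the transport of the matrix formulas, and the observation that $A_{0}^{\varphi}(x)$ being polynomial in $x$ (with $A_{\infty}$ and $R^{\varphi}$ constant) gives the required logarithmic behavior along $\{x=0\}$ and the order-one pole in $\theta^{-1}$.
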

\begin{proof}
Follows from proposition \ref{basevarphi}.
\end{proof}

Define $E^{log}={\cal L}^{\varphi}_{0}/\theta {\cal L}^{\varphi}_{0}$. One could imagine that the counterpart of corollary \ref{STF}: indeed, define, for $i=0,\cdots ,\mu -1$,\\

$\bullet$ $R_{0}[\omega_{i}^{\varphi}]:=[\theta^{2}\nabla_{\partial_{\theta}}\omega_{i}^{\varphi}],$\\

$\bullet$ $\Phi_{\xi}[\omega_{i}^{\varphi}]:=[\theta\nabla_{\partial_{\xi}}\omega_{i}^{\varphi}]$ for any logarithmic vector field $\xi\in Der(\log \{x=0\})$,\\

\noindent and, using the restriction of ${\cal L}^{\varphi}_{\infty}$ to $\tau =0$,\\

$\bullet$ $R_{\infty}[\omega_{i}^{\varphi}]:=[-\nabla_{\tau\partial_{\partial_{\tau}}}\omega_{i}^{\varphi}]$,\\

$\bullet$ $\bigtriangledown_{\xi}[\omega_{i}^{\varphi}]=[\nabla_{\partial_{\xi}}\omega_{i}^{\varphi}]$ for any logarithmic vector field $\xi\in Der(\log \{x=0\})$.\\

\noindent In order to define the 'metric', recall that (see section \ref{dualite})
$$S(\omega_{0}^{\varphi},\omega_{n}^{\varphi})=\theta^{n},$$
$$S(\omega_{k}^{\varphi},\omega_{n-k}^{\varphi})=\theta^{n}$$
for  $k=1,\cdots ,n-1$, 
$$S(\omega_{k}^{\varphi},\omega_{\mu +n-k}^{\varphi})=x\theta^{n}$$
for  $k=n+1,\cdots ,\mu-1$ and
$$S(\omega_{i}^{\varphi},\omega_{j}^{\varphi})=0$$
otherwise. Extend $S$ to ${\cal L}_{0}^{\varphi}$: as above, we get a flat bilinear symmetric form $g$ on $E^{log}$, 
$$g([\omega_{i}^{\varphi}], [\omega_{j}^{\varphi}]):=\theta^{-n}S(\omega_{i}^{\varphi}, \omega_{j}^{\varphi}).$$
The main point is that, of course, $g$ which is not non-degenerate, unless $\mu =n+1$.  

\begin{corollary}\label{LogSTF} (1) The tuple
$(E^{log}, \{0\}, R_{0}, R_{\infty}, \Phi , \bigtriangledown , g)$ is a logarithmic Frobenius type structure if $\mu =n+1$.\\
(2) The tuple  $(E^{log}, \{0\}, R_{0}, R_{\infty}, \Phi , \bigtriangledown)$ is a logarithmic Frobenius type structure without metric if $\mu\geq n+2$.
\end{corollary}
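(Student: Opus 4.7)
\smallskip

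The plan is to mirror the proof of Corollary \ref{STF}, replacing the $\cit[x,x^{-1},\theta]$-lattice by the $\cit[x,\theta]$-lattice ${\cal L}_0^\varphi$ and keeping track of what happens at $x=0$. First I would observe that $E^{log} = {\cal L}_0^\varphi / \theta {\cal L}_0^\varphi$ is $\cit[x]$-free of rank $\mu$ because $\omega^\varphi$ is a $\cit[x,\theta]$-basis of ${\cal L}_0^\varphi$, and that the preceding lemma already guarantees that $\nabla$ sends ${\cal L}_0^\varphi$ into $\theta^{-1}\Omega_{\cit\times\Delta}(\log((\{0\}\times\cit)\cup(\cit\times\{0\})))\otimes {\cal L}_0^\varphi$. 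This pole order ensures that $\theta^2 \nabla_{\partial_\theta}$, $\theta \nabla_{\partial_\xi}$ (for $\xi \in Der(\log \{x=0\})$) and $\nabla_{\partial_\xi}$ all preserve ${\cal L}_0^\varphi$, so their reductions mod $\theta$ give well defined $\cit[x]$-linear operators $R_0$, $\Phi_\xi$, $\bigtriangledown_\xi$ on $E^{log}$, with $\bigtriangledown$ having logarithmic pole along $\{x=0\}$. The definition of $R_\infty$ via the restriction to $\tau = 0$ proceeds exactly as in Corollary \ref{STF}.

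The compatibility relations $\bigtriangledown^2=0$, $\Phi \wedge \Phi = 0$, $[R_0,\Phi]=0$, $\bigtriangledown(R_0)+\Phi = [\Phi,R_\infty]$, $\bigtriangledown(\Phi)=0$ and $\bigtriangledown(R_\infty)=0$ are then just the components of the integrability of $\nabla$ expanded in the basis $\omega^\varphi$ with the explicit matrix recalled in the preceding lemma; the argument is identical to the one in \cite[Chap. V, 2]{Sab1} which underlies Corollary \ref{STF}, the only substantive change being that here $\xi$ is allowed to be a logarithmic vector field only. The symmetries $R_0^*=R_0$, $\Phi^*=\Phi$ and $R_\infty + R_\infty^* = nId$ are inherited from Lemma \ref{symetrie} applied to the basis $\omega^\varphi$, and $\bigtriangledown(g)=0$ follows from the $\nabla$-flatness of $S$.

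The only genuinely new content, and the source of the dichotomy between (1) and (2), is the nondegeneracy of $g$. From the formulas for $S$ in the basis $\omega^\varphi$ displayed just before the corollary, $S$ takes values in $\cit[x,\theta]\theta^n$ on ${\cal L}_0^\varphi \times {\cal L}_0^\varphi$, so $g$ is a well defined $\cit[x]$-bilinear symmetric form on $E^{log}$, and its Gram matrix in the basis $([\omega_0^\varphi],\dots,[\omega_{\mu -1}^\varphi])$ is block anti-diagonal: an anti-diagonal block of size $n+1$ with entries equal to $1$, next to an anti-diagonal block of size $\mu - n - 1$ with entries equal to $x$. Its determinant is therefore $\pm x^{\mu - n - 1}$, which is a unit of $\cit[x]$ precisely when $\mu = n+1$. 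In that case $g$ is nondegenerate, yielding the full logarithmic Frobenius type structure of (1); when $\mu \geq n+2$ the form $g$ degenerates along $\{x=0\}$, giving only the logarithmic Frobenius type structure without metric of (2), all other axioms still being in place.

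The main obstacle is really only bookkeeping: one must verify that integrability of $\nabla$ with logarithmic pole along $\{x=0\}$ translates into the Frobenius-type compatibilities when $\xi$ is restricted to logarithmic vector fields, and that $S$ extended to ${\cal L}_0^\varphi$ indeed lands in $\cit[x,\theta]\theta^n$ so that $g$ has values in $\cit[x]$. Both points are direct consequences of the explicit formulas for $\nabla$ and $S$ in the basis $\omega^\varphi$ recalled in the previous subsections, so once the Gram matrix computation above is performed, the dichotomy in the statement follows immediately.
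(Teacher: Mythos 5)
Your proof is correct, but it takes a more hands-on route than the paper. The paper's proof is a one-line appeal to Reichelt's machinery: the preceding lemma exhibits ${\cal L}_0^\varphi$ together with $S$ as a $\log(\{x=0\})$-$trTLEP$-structure, and \cite[Proposition 1.10]{R} is the packaged one-to-one correspondence between such structures and logarithmic Frobenius type structures, so all the compatibility relations come for free. You instead re-verify the axioms directly, mirroring Corollary \ref{STF}: integrability of $\nabla$ in the basis $\omega^\varphi$ gives the relations involving $\bigtriangledown$, $\Phi$, $R_0$, $R_\infty$, and Lemma \ref{symetrie} (which does carry over to the $\omega^\varphi$-basis, since the $S$-values on $\omega^\varphi$ displayed before the corollary are exactly those of Section \ref{dualite} rewritten via $\omega^\varphi=\omega P$) gives the adjunction identities. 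In effect you re-prove the relevant special case of Reichelt's correspondence by hand, with the only point needing care being that $\xi$ ranges over logarithmic vector fields; this is more self-contained but longer. Your treatment of the dichotomy is also slightly more explicit than the paper's: the Gram matrix of $g$ in the basis $([\omega_0^\varphi],\dots,[\omega_{\mu-1}^\varphi])$ is anti-diagonal by blocks with $n+1$ entries equal to $1$ and $\mu-n-1$ entries equal to $x$, so its determinant is a unit in $\cit[x]$ exactly when $\mu=n+1$; the paper simply asserts the degeneracy of $g$ for $\mu\geq n+2$ in the paragraph preceding the corollary. Both arguments are sound and rest on the same two inputs (the lemma on the logarithmic pole of $\nabla$ and the explicit form of $S$).
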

\begin{proof} The previous lemma finally gives a $\log (\{x=0\})-trTLEP$-structure (see \cite[Definition 1.8]{R}) if $\mu =n+1$ and we use the 1-1 correspondance between such structures and logarithmic Frobenius type structures given by \cite[Proposition 1.10]{R}.
 \end{proof}

\begin{remark} (1) We can also consider the lattice ${\cal L}_{0}$ ({\em resp.} ${\cal L}_{0}^{\psi}$), defined using the basis 
$\omega$ (resp. $\omega^{\psi}$). 
We have 
$${\cal L}_{0}^{\varphi}\subset {\cal L}_{0}^{\psi}\subset {\cal L}_{0}.$$
Anyway, we will see below that, even if we forget the metric, the 'good' one to consider is ${\cal L}^{\varphi}_{0}$.\\
(2) In \cite{dGMS}, and for $w_{1}=\cdots =w_{n}=1$, another (and more intrinsic) extension of $G_{0}$ is considered, built with the help of logarithmic vector fields.
I don't know for the moment how to compare it with ${\cal L}_{0}^{\varphi}$ and if one can define in this way extensions of $G_{0}$ if there exists an $w_{i}\geq 2$. 
\end{remark}

\subsection{Construction of a logarithmic Frobenius manifold}
\label{constructionlogfrobman}

A manifold $M$ is a {\em Frobenius manifold with logarithmic poles along the divisor} $D$ (for short a logarithmic Frobenius manifold) if $Der_{M}(\log D)$ is equipped with a metric, a multiplication and two (global)
logarithmic vector fields (the unit $e$ for the multiplication and the Euler vector field $E$), all these objects satisfying the usual compatibility relations (see \cite[Definition 1.4]{R}). We can also define a Frobenius manifold with logarithmic poles {\em without metric}: in this case, we still need a flat, torsionless connection, a symmetric Higgs field (that is a product) and two global logaritmic vector fields as before. Of course $D=\{x=0\}$ in what follows. 
According to T. Reichelt \cite[Theorem 1.12]{R} the construction in section \cite{HeMa} can be adapted to get a logarithmic Frobenius manifold from a logarithmic Frobenius type structure: let $(E^{log}, D, R_{0}, R_{\infty}, \Phi , \bigtriangledown , g)$ be a logarithmic Frobenius type structure, $\omega$ be a section of $E^{log}$. Define 
$$\varphi_{\omega}:Der_{M}(\log D)\rightarrow E^{log},$$
by
$$\varphi_{\omega}(\xi):=-\Phi_{\xi}(\omega ).$$
One says that $\omega$ satisfies\\

\noindent $\bullet$ (IC) if $\varphi_{\omega}|_{0}$ is injective,\\

\noindent $\bullet$ (GC) if $\omega |_{0}$ and its images under iteration of the maps $\Phi_{\xi}|_{0}$, $\xi\in Der_{M}(\log D)$, and $R_{0}|_{0}$ generate $E^{log}|_{0}$,\\

\noindent $\bullet$ (EC) if $\omega$ is an eigenvector of $R_{\infty}$.\\

\noindent We will say that a section of $E^{log}$ is {\em log-pre-primitive} ({\em resp. homogeneous}) if its restriction to $M-D$ is $\bigtriangledown$-flat and if it satisfies conditions (IC), (GC) ({\em resp.} (EC)). We now come back to the logarithmic Frobenius type structure given by corollary \ref{LogSTF}.

\begin{lemma} The (class of the) form $\omega_{0}^{\varphi}$ in $E^{log}$ is log-pre-primitive and homogeneous.
 \end{lemma}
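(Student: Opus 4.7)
The proof consists in verifying the three conditions (IC), (GC), (EC) of the definition, together with $\bigtriangledown$-flatness on $M-D$. All four are direct computations from the explicit form of $\nabla$ on the basis $\omega^{\varphi}$ given by proposition \ref{basevarphi}, taking the appropriate restrictions defining $\Phi$, $R_0$, $\bigtriangledown$ and $R_{\infty}$ on $E^{log}$.

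First I would dispatch homogeneity and flatness together. Since the matrix of $-\nabla_{\tau\partial_\tau}$ on $\omega^{\varphi}$ is $A_0^{\varphi}(x)/\theta+A_{\infty}$, the $1/\theta$ term vanishes when restricted to $\tau=0$, so $R_{\infty}$ has matrix $A_{\infty}=\mathrm{diag}(\alpha_0,\ldots,\alpha_{\mu-1})$ in the basis $\overline{\omega}^{\varphi}$. Since $\alpha_0=0$ by corollary \ref{lesalphak}, we get $R_{\infty}[\omega_0^{\varphi}]=0$, giving (EC). For flatness, I use the first columns of $R^{\varphi}$ (which is zero) and of $A_0^{\varphi}(x)$ (which is $(0,\mu,0,\ldots,0)^T$): this yields $\nabla_{x\partial_x}\omega_0^{\varphi}=-\omega_1^{\varphi}/\theta=-\tau\omega_1^{\varphi}$, whose class at $\tau=0$ is zero. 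Thus $\bigtriangledown_{x\partial_x}[\omega_0^{\varphi}]=0$, and consequently $\bigtriangledown[\omega_0^{\varphi}]=0$ on $M-D=\cit^*$.

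Next for (IC): the space $\mathrm{Der}(\log\{x=0\})|_0$ is one-dimensional, generated by $x\partial_x|_0$. Using the computation above, $\Phi_{x\partial_x}[\omega_0^{\varphi}]=[\theta\nabla_{x\partial_x}\omega_0^{\varphi}]=-[\omega_1^{\varphi}]$ in $E^{log}={\cal L}_0^{\varphi}/\theta{\cal L}_0^{\varphi}$, so $\varphi_{\omega_0^{\varphi}}(x\partial_x)|_0=[\omega_1^{\varphi}]|_0$, which is a nonzero basis element of $E^{log}|_0$. Hence the map is injective. For (GC), the key observation is that at $x=0$ the matrix $\overline{A}_0^{\varphi}$ reduces to a single nilpotent Jordan block of size $\mu$ (its only $x$-dependent entry, the top-right $\mu x$, vanishes). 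Since $R_0[\omega_i^{\varphi}]=[\theta^2\nabla_{\partial_\theta}\omega_i^{\varphi}]=[A_0^{\varphi}(x)\omega_i^{\varphi}]$ modulo $\theta$, the endomorphism $R_0|_0$ is represented by $\overline{A}_0^{\varphi}$ in the basis $\overline{\omega}^{\varphi}$. Therefore $(R_0|_0)^k[\omega_0^{\varphi}]|_0=\mu^k[\omega_k^{\varphi}]|_0$ for $k=0,\ldots,\mu-1$, which is a basis of $E^{log}|_0$; in particular (GC) is obtained using only iterations of $R_0|_0$, without invoking $\Phi_\xi|_0$.

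All of this is routine once the matrices from proposition \ref{basevarphi} are on the table, so there is no real obstacle. The only delicate point is keeping straight the two models of $E^{log}$ (as ${\cal L}_0^{\varphi}/\theta{\cal L}_0^{\varphi}$ for $R_0$ and $\Phi$, and as the restriction of ${\cal L}_{\infty}^{\varphi}$ at $\tau=0$ for $R_{\infty}$ and $\bigtriangledown$), but the explicit form of the connection matrix makes the translation between the two transparent.
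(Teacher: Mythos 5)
Your proposal is correct and follows essentially the same route as the paper's (much terser) proof: flatness from the vanishing of the first column of $R^{\varphi}$, (EC) from $\alpha_{0}=0$, and (IC)/(GC) from the fact that the relevant operator has matrix $-A_{0}^{\varphi}(x)/\mu$, whose value at $x=0$ is a single nilpotent Jordan block. The only cosmetic difference is that you iterate $R_{0}|_{0}$ for (GC) while the paper points to $\Phi_{x\partial_{x}}$; since both are represented by $\overline{A}_{0}^{\varphi}$ up to the scalar $-1/\mu$, this changes nothing.
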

\begin{proof} Follows from proposition \ref{basevarphi}: 
the flatness is given by the fact that $R^{\varphi}(\omega_{0}^{\varphi})=0$; conditions (IC) and (GC) hold because the matrix of $\Phi_{x\partial_{x}}$ is $-\frac{A_{0}^{\varphi}(x)}{\mu}$; last, we have $A_{\infty}(\omega^{\varphi}_{0})=0$ and this gives (EC). 
\end{proof}

\begin{remark} Assume that $\mu\geq n+2$: the section $\omega_{0}$ in ${\cal L}_{0}$ is flat but does not satisfy (IC) and the section $\omega_{0}^{\psi}$ in ${\cal L}_{0}^{\psi}$ is flat but does not satisfy (GC). In the former case, the only section which satisfies (IC), (EC) and (GC) is $\omega_{1}$ but this one is not flat; in the latter case, the only section which satisfies (IC), (EC) and (GC) is $\omega_{n+1}^{\psi}$ but this one is not flat. This explains why we work with ${\cal L}_{0}^{\varphi}$. 
\end{remark}

\begin{corollary}
The log-pre-primitive and homogeneous section $\omega_{0}^{\varphi}$ together with the logarithmic Frobenius type structure $(E^{log}, \{0\}, R_{0}, R_{\infty}, \Phi , \bigtriangledown , g)$ define a logarithmic Frobenius manifold if $\mu =n+1$ and a logarithmic Frobenius manifold without metric if $\mu\geq n+2$. 
\end{corollary}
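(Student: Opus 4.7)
The plan is to invoke Reichelt's theorem \cite[Theorem 1.12]{R}, which produces a logarithmic Frobenius manifold (with or without metric) from a logarithmic Frobenius type structure (with or without metric) equipped with a log-pre-primitive and homogeneous section. Both ingredients are already in hand: corollary \ref{LogSTF} supplies the logarithmic Frobenius type structure---with metric when $\mu=n+1$ and without metric when $\mu\geq n+2$---and the preceding lemma shows that $\omega_{0}^{\varphi}$ is a log-pre-primitive and homogeneous section of $E^{log}$.

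Concretely, the first step is to apply the universal unfolding part of Reichelt's theorem, which extends our logarithmic Frobenius type structure on $(\Delta,\{0\})$ to a universal logarithmic deformation $\tilde{\cal F}$ parametrized by a germ of a smooth variety $\tilde M$ containing $(\Delta,\{0\})$. I would then take the $\tilde{\bigtriangledown}$-flat extension $\tilde{\omega}_{0}^{\varphi}$ of $\omega_{0}^{\varphi}$ and form the period map
$$\varphi_{\tilde{\omega}_{0}^{\varphi}}:Der_{\tilde M}(\log\tilde D)\to \tilde E^{log},\qquad \xi\mapsto -\tilde{\Phi}_{\xi}(\tilde{\omega}_{0}^{\varphi}).$$
Conditions (IC) and (GC) at the origin, combined with the openness of the isomorphism condition, would then give that $\varphi_{\tilde{\omega}_{0}^{\varphi}}$ is an isomorphism on a neighborhood of $\tilde D$; transporting the multiplication (from $\tilde R_{0}$), the unit (from $\tilde{\omega}_{0}^{\varphi}$), the Euler field (from $\tilde R_{\infty}$), the flat connection, and---if present---the metric through $\varphi_{\tilde{\omega}_{0}^{\varphi}}$ would endow $\tilde M$ with a logarithmic Frobenius manifold structure in the sense of \cite[Definition 1.4]{R}.

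The case distinction affects only whether the metric is transported: the rest of the construction goes through verbatim in the non-metric case, since \cite[Theorem 1.12]{R} is stated in both variants. The same argument thus yields a genuine logarithmic Frobenius manifold when $\mu=n+1$ and a metric-free logarithmic Frobenius manifold when $\mu\geq n+2$. The real work was done in corollary \ref{LogSTF} and the preceding lemma; with those in hand, the only remaining delicate point---already embedded in Reichelt's statement---is the existence and universality of the logarithmic unfolding, which would be the main obstacle if one were not able to cite \cite{R} as a black box.
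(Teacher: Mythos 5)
Your proposal is correct and follows exactly the paper's route: the paper's entire proof is ``Follows now from \cite[theorem 1.12]{R}'', relying on corollary \ref{LogSTF} and the preceding lemma just as you do. Your unpacking of what Reichelt's theorem actually does (universal logarithmic unfolding, period map, transport of structures, with the metric handled only in the $\mu=n+1$ case) is a faithful elaboration rather than a different argument.
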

\begin{proof}
Follows now from \cite[theorem 1.12]{R}. 
\end{proof}

\noindent If $\mu =n+1$, one could expect an explicit description of the logarithmic Frobenius manifold obtained, as in section \ref{varfrob}. Unfortunately, it is much more difficult and, except some trivial cases, I do not have results in this direction.

\end{document}